\documentclass{amsart}

\usepackage[english]{babel} 
\usepackage[utf8]{inputenc}  
\usepackage[T1]{fontenc}       

\usepackage{amssymb,amsfonts,amsthm,amsmath}

\usepackage{enumerate}
\usepackage{xspace}
\usepackage{comment}
\usepackage{graphicx}
\usepackage{multirow}
\usepackage[all]{xy}
\usepackage[hyphens]{url}
\usepackage{tikz}
\usepackage{lipsum}
\usetikzlibrary{arrows}

\newcommand{\F}{{\mathbb F}}
\newcommand{\Q}{{\mathbb Q}}
\newcommand{\Z}{{\mathbb Z}}
\newcommand{\PP}{{\mathbb P}}
\newcommand{\ZTF}{{\Z/2\Z\times\Z/4\Z}}

\newcommand{\ds}{\displaystyle}

\newcommand\funding[1]{%
  \begingroup
  \renewcommand\thefootnote{}\footnote{#1}%
  \addtocounter{footnote}{-1}%
  \endgroup
}

\DeclareMathOperator{\Gal}{Gal}
\DeclareMathOperator{\GL}{GL}
\DeclareMathOperator{\Frob}{Frobenius}
\DeclareMathOperator{\Spec}{Spec}
\DeclareMathOperator{\Prob}{\mathbb{P}}
\DeclareMathOperator{\fix}{Fix}
\DeclareMathOperator{\Aut}{Aut}

\DeclareMathOperator{\Dec}{Dec}

\DeclareMathOperator{\Res}{Res}
\DeclareMathOperator{\new}{new}
\DeclareMathOperator{\lift}{Lift}
\DeclareMathOperator{\Id}{Id}

\theoremstyle{plain}
\newtheorem{theo}{Theorem}[section]

\newtheorem{prop}[theo]{Proposition}
\newtheorem{cor}[theo]{Corollary}

\theoremstyle{definition}
\newtheorem{deff}[theo]{Definition}
\newtheorem{ex}[theo]{Example}

\theoremstyle{remark}
\newtheorem{rem}[theo]{Remark}
\newtheorem{nott}[theo]{Notation}

\title[Finding ECM-friendly curves through a study of Galois properties]
{Finding ECM-friendly curves\\through a study of Galois properties}

\author[Barbulescu]{Razvan Barbulescu}
\address{Université de Lorraine, CNRS, INRIA, France}
\curraddr{}
\email{}
\thanks{}

\author[Bos]{Joppe W. Bos}
\address{Microsoft Research, One Microsoft Way, Redmond, WA 98052, USA}
\curraddr{}
\email{}
\thanks{}

\author[Bouvier]{Cyril Bouvier}
\address{ENS Paris, Université de Lorraine, CNRS, INRIA, France}
\curraddr{}
\email{}
\thanks{}

\author[Kleinjung]{Thorsten Kleinjung}
\address{Laboratory for Cryptologic Algorithms, EPFL, Lausanne, Switzerland}
\curraddr{}
\email{}
\thanks{}

\author[Montgomery]{Peter L. Montgomery}
\address{Microsoft Research, One Microsoft Way, Redmond, WA 98052, USA}
\curraddr{}
\email{}
\thanks{}

\keywords{Elliptic Curve Method (ECM), Edwards curves, Montgomery curves, torsion properties, Galois
groups}

\begin{document}
\begin{abstract}
In this paper we prove some divisibility properties of the cardinality of 
elliptic curve groups modulo primes. These proofs explain the good behavior of 
certain parameters when using Montgomery or Edwards curves in the setting
of the elliptic curve method (ECM) for integer factorization. 
The ideas behind the proofs help us to find new infinite families of elliptic
curves with good division properties increasing the success
probability of ECM.
\end{abstract}

\maketitle

\section{Introduction}
The elliptic curve method (ECM) for integer factorization~\cite{ECM} is the 
asymptotically fastest known method for finding relatively small factors $p$ of large
integers $N$. In practice, ECM is used, on the one hand, to factor large integers.
For instance, the 2011 ECM-record is a 241-bit factor of $2^{1181}-1$~\cite{MersenneARITH}.
On the other hand, ECM is used to factor many small (100 to 200 bits) 
integers as part of the number field sieve~\cite{PolSNFS,NFS,cado}, 
the most efficient general purpose integer factorization method. 

Traditionally, the elliptic curve arithmetic used in ECM is implemented using 
Montgomery curves~\cite{Montgomery:1987} (e.g., in the widely-used 
GMP-ECM software~\cite{GMPECM}). Generalizing the work of Euler and Gauss, Edwards
introduced a new normal form for elliptic curves~\cite{edwards} which results
in a fast realization of the elliptic curve group operation in practice.
These Edwards curves have been generalized by Bernstein and Lange~\cite{Bernstein2007Asia}
for usage in cryptography. Bernstein et al. explored 
the possibility to use these curves in the ECM setting~\cite{cryptoeprint:2008:016}. 
After Hisil et al.~\cite{Hisil2008Asiacrypt} published a coordinate system which results in the
fastest known realization of curve arithmetic, a follow-up paper by Bernstein et al.
discusses the usage of the so-called ``$a=-1$'' twisted Edwards curves~\cite{starfish} in ECM.

It is common to construct or search for curves which have favorable properties.
The success of ECM depends on the smoothness of
the cardinality of the curve considered modulo the unknown prime divisor $p$ of
$N$. This usually means constructing curves with large torsion group over $\Q$ or finding 
curves such that the order of the elliptic curve, when considered modulo a 
family of primes, is always divisible by an additional factor. 
Examples are the Suyama construction~\cite{Suyama}, the curves proposed by 
Atkin and Morain~\cite{1993-atkin}, a translation of these techniques to 
Edwards curves~\cite{cryptoeprint:2008:016,starfish}, and a family of curves suitable 
for Cunningham numbers~\cite{BrierC10}.

\funding{This work was supported by the Swiss National Science Foundation under grant 
number 200020-132160 and by a PHC Germaine de Sta\"el grant.}

In this paper we study and prove divisibility properties of 
the cardinality of elliptic curves over prime fields. We do this by
studying properties of Galois groups of torsion points
using Chebotarev's theorem~\cite{Ne86}. Furthermore, we investigate 
some elliptic curve parameters for which ECM finds exceptionally many primes 
in practice, but which do not fit in any of the known cases of good torsion
properties. We prove this behavior and provide parametrizations for 
infinite families of elliptic curves with these properties. 

\section{Galois Properties of Torsion Points of Elliptic Curves}
In this section we give a systematic way to compute the
probability that the order of a given elliptic curve reduced by an arbitrary
prime is divisible by a certain prime power.   
\subsection{Torsion Properties of Elliptic Curves.}
\begin{deff}\label{Frob} Let $K$ be a finite Galois extension of $\Q$,
$p$ a prime and $\mathfrak{p}$ a prime ideal above $p$ with residue
field $k_{\mathfrak{p}}$.
The decomposition group $\Dec(\mathfrak{p})$ of $\mathfrak{p}$ is the
subgroup of $\Gal(K/\Q)$ which stabilizes $\mathfrak{p}$.
Call $\alpha^{(\mathfrak{p})}$ the canonical morphism from
$\Dec(\mathfrak{p})$ to $\Gal(k_{\mathfrak{p}}/\mathbb{F}_p)$ and
let $\phi_{\mathfrak{p}}$ be the Frobenius automorphism on the field $k_{\mathfrak{p}}$.
We define
$\Frob(p)=\bigcup_{\mathfrak{p}\mid p}
  (\alpha^{(\mathfrak{p})})^{-1}(\phi_{\mathfrak{p}})$.
\end{deff}
In order to state Chebotarev's theorem we say that a set $S$ of primes
admits a natural density equal to $\delta$ and we 
write $\Prob(S)=\delta$ if $\lim_{N\rightarrow \infty}\frac{\#(S\bigcap
\Pi(N))}{\#\Pi(N)}$ exists and equals $\delta$, where $\Pi(N)$ is the set of
primes up to $N$. If event($p$) is a property which can be defined for all
primes except a finite set (thus of null density), when we note $\Prob(\text{event}(p))$ we tacitly exclude the primes where
 event$(p)$ cannot be defined.  

\begin{theo}[Chebotarev,~\cite{Ne86}] \label{Chebotarev}
Let $K$ be a finite Galois extension of $\Q$. Let $H \subset \Gal(K/\Q)$ be a
conjugacy class. Then $$\Prob(\Frob(p)=H )=\frac{\#H}{\#\Gal(K/\Q)}.$$
\end{theo}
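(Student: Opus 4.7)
The plan is to reduce the theorem, in three layers, to Dirichlet's theorem on primes in arithmetic progressions, with Artin $L$-functions providing the analytic bridge.

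First, I would treat the cyclotomic case $K = \Q(\zeta_m)$. Here there is a canonical isomorphism $\Gal(K/\Q) \simeq (\Z/m\Z)^\times$ sending $\Frob(p)$ to the residue class of $p$ modulo $m$ for every $p \nmid m$. The theorem then asserts that primes are equidistributed among the $\varphi(m)$ residue classes with density $1/\varphi(m)$ each, which is precisely Dirichlet's theorem on arithmetic progressions; its proof rests on the non-vanishing at $s = 1$ of the Dirichlet $L$-functions $L(s,\chi)$ attached to non-trivial characters of $(\Z/m\Z)^\times$.

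Second, I would handle an arbitrary abelian Galois extension $K/\Q$. By Kronecker–Weber, $K$ embeds in some $\Q(\zeta_m)$, so characters of $\Gal(K/\Q)$ pull back to characters of $(\Z/m\Z)^\times$ and the associated Artin $L$-functions coincide with Dirichlet $L$-functions. Expanding $\log L(s,\chi) = \sum_{p} \chi(\Frob(p))\, p^{-s} + O(1)$ as $s \to 1^+$ and applying orthogonality of characters to isolate a fixed $\sigma \in \Gal(K/\Q)$ yields density $1/\#\Gal(K/\Q)$, as required in the abelian setting where $H = \{\sigma\}$.

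Third, for a general, possibly non-abelian Galois $K/\Q$ with group $G$, I would invoke the cyclic subgroup reduction. Fix $\sigma \in H$, set $C = \langle \sigma \rangle$, and let $F = K^{C}$. Then $K/F$ is cyclic, so the abelian case just established gives natural density $1/\#C$ for primes $\mathfrak{q}$ of $F$ at which $\Frob_{K/F}(\mathfrak{q}) = \sigma$. A careful bookkeeping over primes $\mathfrak{q}$ lying above a given rational prime $p$, together with the fact that the $G$-conjugacy class of $\sigma$ is $H$, converts the density over primes of $F$ into the asserted density $\#H/\#G$ for rational primes with $\Frob(p) = H$.

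The principal obstacle, and the real analytic heart of the whole proof, is the non-vanishing $L(1,\chi) \neq 0$ for non-trivial irreducible Artin characters $\chi$. In the one-dimensional case this is Dirichlet's classical theorem, but for irreducible $\chi$ of dimension $\geq 2$ the direct approach is inaccessible by elementary means; the standard remedy is Brauer's induction theorem, which writes $\chi$ as a $\Z$-linear combination of characters induced from one-dimensional characters of (not necessarily normal) subgroups, thereby reducing the analytic problem to the abelian situation already handled and closing the argument.
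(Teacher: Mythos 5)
The paper offers no proof of this statement: it is quoted verbatim from Neukirch \cite{Ne86}, so your sketch has to be measured against the standard proof, whose architecture you reproduce correctly in outline (cyclotomic case, abelian case, reduction of the general case to the cyclic subgroup $\langle\sigma\rangle$). There is, however, a genuine gap at the hinge of the argument. Your abelian case is established only for abelian extensions of $\Q$, via Kronecker--Weber and Dirichlet $L$-functions. In the third step you then apply ``the abelian case just established'' to the cyclic extension $K/F$ with $F=K^{\langle\sigma\rangle}$ --- but $F$ is in general a proper number field, not $\Q$, and Kronecker--Weber fails over $F$. What is needed at that point is either Artin reciprocity, so that the characters of $\Gal(K/F)$ become Hecke characters of $F$ whose $L$-functions are entire and non-vanishing at $s=1$, or else Chebotarev's original device of crossing $K/F$ with a cyclotomic extension $F(\zeta_m)/F$ to reduce to genuinely cyclotomic subextensions. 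Without one of these the central step is unsupported. Relatedly, your closing paragraph on Brauer induction is a red herring in this architecture: the entire purpose of reducing to $\langle\sigma\rangle$ is that only one-dimensional characters ever appear, so no irreducible Artin character of dimension $\geq 2$ needs to be controlled. Brauer induction belongs to the alternative proof that expands the indicator function of the class $H$ directly into irreducible characters of $G$; mixing the two routes obscures which non-vanishing statement your argument actually requires.

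A secondary point: the theorem as stated asserts a \emph{natural} density, while the expansion of $\log L(s,\chi)$ as $s\to 1^+$ together with orthogonality yields only a \emph{Dirichlet} density. Passing to natural density requires an additional Tauberian or prime-number-theorem-type input (non-vanishing of the relevant $L$-functions on the line $\mathrm{Re}\,s=1$), which your sketch does not address.
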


Before applying Chebotarev's theorem to the case of elliptic curves, we
introduce some notation. For every elliptic curve $E$ over a
field $F$ and all $m\in\mathbb{N}$, $m \ge 2$,  we consider the field $F(E[m])$ which is
the smallest extension of $F$ containing all the $m$-torsion of $E$.
The next result is classical, but we present its proof for the intuition it
brings.    

\begin{prop}\label{inj} For every integer $m\geq 2$ and any elliptic curve
$E$ over some field $F$, the following hold:
\begin{enumerate}
\item $F(E[m])/F$ is a Galois extension; 
\item there is an injective morphism $\iota_m:\Gal(F(E[m])/F)\hookrightarrow
\Aut(E(\overline{F})[m])$.
\end{enumerate}
\end{prop}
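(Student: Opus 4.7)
The plan is to handle the two statements in order, using in both cases the single key observation that the group law and the multiplication-by-$m$ map on $E$ are defined over $F$, so the absolute Galois group $G_F = \Gal(\overline{F}/F)$ commutes with them.

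For (1), I would first show that the set $E[m] \subset E(\overline{F})$ is stable under $G_F$. Concretely, if $P \in E(\overline{F})$ satisfies $[m]P = O$ and $\sigma \in G_F$, then since the isogeny $[m]$ is given by rational functions with coefficients in $F$, one has $[m]\sigma(P) = \sigma([m]P) = \sigma(O) = O$, so $\sigma(P) \in E[m]$. Consequently the field $F(E[m])$, obtained by adjoining the finitely many coordinates of points of $E[m]$, is sent into itself by every $F$-embedding $F(E[m]) \hookrightarrow \overline{F}$, which gives normality. Separability holds because $F(E[m])$ is generated by the (finitely many) roots of the $m$-division polynomial together with the associated $y$-coordinates; when $\car F \nmid m$ these are separable over $F$, and when $\car F \mid m$ one uses that $E[m]$ still consists of finitely many geometric points whose coordinates generate a separable extension of $F$ (equivalently, $F(E[m])$ equals the fixed field of the open subgroup of $G_F$ fixing $E[m]$ pointwise).

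For (2), once $F(E[m])/F$ is Galois, every $\sigma \in \Gal(F(E[m])/F)$ extends to an element of $G_F$ and, by the stability proved above, restricts to a bijection of $E[m]$. The fact that the addition law on $E$ is given by rational functions over $F$ implies $\sigma(P+Q)=\sigma(P)+\sigma(Q)$, so this restriction is an element of $\Aut(E[m])$. Defining $\iota_m(\sigma)$ to be this restriction yields a group homomorphism
\[
\iota_m:\Gal(F(E[m])/F)\longrightarrow \Aut\bigl(E(\overline{F})[m]\bigr).
\]

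For injectivity, suppose $\iota_m(\sigma)=\Id$, i.e.\ $\sigma$ fixes every point of $E[m]$. Then $\sigma$ fixes every coordinate of every $m$-torsion point, hence fixes the generators of $F(E[m])$ over $F$, so $\sigma=\Id$ in $\Gal(F(E[m])/F)$.

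I do not expect a serious obstacle: the whole argument is a direct unpacking of the definitions of $F(E[m])$ and of the Galois action, and rests on the single fact that $E$, its group law, and $[m]$ are all defined over $F$. The only mildly delicate point is verifying separability of $F(E[m])/F$ in positive characteristic, which is why I would phrase the argument in terms of the $G_F$-stability of $E[m]$ rather than directly through the division polynomial.
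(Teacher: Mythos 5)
Your proof is correct and follows essentially the same route as the paper's: both arguments rest on the single fact that the group law and $[m]$ are defined over $F$, so that the coordinates of $E[m]$ satisfy a system of equations over $F$ (giving normality of $F(E[m])/F$) and the Galois action permutes $E[m]$ compatibly with addition (giving the homomorphism $\iota_m$, whose injectivity is immediate since $F(E[m])$ is generated by the torsion coordinates). Your extra attention to separability in positive characteristic is a point the paper glosses over (it works only with $\Q$ and with $\F_p$ for $p\nmid m$), but it does not change the structure of the argument.
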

\begin{proof}
($1$) Since the addition law of $E$ can be expressed by rational functions
over $F$, there exist polynomials $f_m,g_m\in F[X,Y]$
such that the coordinates of the points in $E(\overline{F})[m]$ are the
solutions of the system $(f_m=0,g_m=0)$. Therefore $F(E[m])$ is the splitting
field of $\Res_X(f_m,g_m)$ and $\Res_Y(f_m,g_m)$ and in particular is Galois.

\noindent ($2$) For each $\sigma\in\Gal(F(E[m])/F)$ we call $\iota_m(\sigma)$ the application
which sends $(x,y)\in E(\overline{F})[m]$ into $(\sigma(x),\sigma(y))$.
Thanks to the discussion above, $\iota_m(\sigma)$ sends points of
$E(\overline{F})[m]$ in $E(\overline{F})[m]$. Since the addition law can be expressed by rational
functions over $F$, for each $\sigma$, $\iota_m(\sigma)\in\Aut(E(\overline{F})[m])$.
One easily checks that $\iota_m$ is a group morphism and its kernel is the
identity.
\end{proof}

\begin{nott}
We fix generators for $E(\overline{\Q})[m]$, thereby
inducing an isomorphism
${\psi_m: \Aut(E(\overline{\Q})[m]) \rightarrow \GL_2(\Z/m\Z)}$.
Let $\iota_m$ be the injection given by Proposition~\ref{inj}.  We call
$\rho_m:\Gal(\Q(E[m])/\Q)\rightarrow \GL_2(\Z/m\Z)$ the injective morphism 
$\psi_m\circ\iota_m$.

Let $p$ be a prime such that $E$ has good reduction at $p$ and $p\nmid m$.
Let $\iota_m^{(p)}$ be the
injection of $\Gal(\F_p(E[m])/\F_p)$ into $\Aut(E(\overline{\F_p})[m])$
given by Proposition
\ref{inj}. By~\cite[Prop.~VII.3.1]{Si09} there is a
canonical isomorphism $r_m^{(\mathfrak{p})}$ from $\Aut(E(\overline{\Q})[m])$ to
$\Aut(E(\overline{\F_p})[m])$ for each prime ideal $\mathfrak{p}$ over $p$.

\end{nott}

\begin{rem} \label{d} Note that $\#\Gal(\Q(E[m])/\Q)$ is bounded by
$\#\GL_2(\Z/m\Z)$. For every prime $\pi$, we have
$\#\GL_2(\Z/\pi\Z)=(\pi-1)^2(\pi+1)\pi$, and for every integer $k\geq 1$,
$\#\GL_2(\Z/\pi^{k+1}\Z)=\pi^4 \#\GL_2(\Z/\pi^k\Z)$. 
\end{rem}

\begin{nott} 
For all $g\in \GL_2(\Z/m\Z)$ we put  $\fix(g)=\{v\in (\Z/m\Z)^2\mid
g(v)=v \}$.
Conjugation of $g$ gives an isomorphic group of fixed elements.
If we are interested only in the isomorphism class we
use the notation $\fix(C)$ where $C$ is a set of conjugated 
elements.
We use analogous notations for
$\Aut(E(\overline{\Q})[m])$ and $\Aut(E(\overline{\F_p})[m])$.
\end{nott}

\begin{theo}[] \label{main}
Let $E$ be an elliptic curve over $\Q$ and $m \ge 2$ be an integer. Put
$K=\Q(E[m])$. Let $T$ be a subgroup of 
$\Z/m\Z \times \Z/m\Z$. Then,
\begin{enumerate}
\item   $\displaystyle \Prob(E(\F_p)[m] \simeq T) = \frac{\#\{g \in
\rho_m(\Gal(K/\Q)) \mid
\fix(g) \simeq T\}}{\#\Gal(K/\Q)}.$
\item  Let $a,n\in\mathbb{N}$ such that $a\leq n$ and
$\gcd(a,n)=1$ and let $\zeta_n$ be a primitive $n$th root of
unity. Put $G_a=\{\sigma\in \Gal(K(\zeta_n)/\Q)\mid
\sigma(\zeta_n)=\zeta_n^a\}$. Then:
\[
\Prob(E(\F_p)[m] \simeq T\mid p\equiv a \bmod n) = 
  \frac{\#\{\sigma \in G_a \mid 
        \fix(\rho_m(\sigma_{\vert K})) \simeq T\}}
       {\#G_a}.
\]
\end{enumerate}
\end{theo}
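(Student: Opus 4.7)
The plan is to reduce both statements to Chebotarev's theorem (Theorem~\ref{Chebotarev}) by identifying, for each prime $p$ of good reduction with $p\nmid m$, the group $E(\F_p)[m]$ with the fixed subgroup of any Frobenius element acting on $E[m]$. The key input is the canonical isomorphism $r_m^{(\mathfrak{p})}$ recalled in the notation block: for every prime ideal $\mathfrak{p}\mid p$ of $\Z[E[m]]$, the reduction induces an isomorphism $E(\overline{\Q})[m]\simeq E(\overline{\F_p})[m]$ that intertwines the action of $(\alpha^{(\mathfrak{p})})^{-1}(\phi_{\mathfrak{p}})\in\Dec(\mathfrak{p})\subset\Gal(K/\Q)$ with the action of the $\F_p$-Frobenius $\phi_{\mathfrak{p}}$. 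Since $E(\F_p)[m]$ is by definition the $\phi_p$-fixed part of $E(\overline{\F_p})[m]$, this gives, for every $\sigma\in\Frob(p)$, an isomorphism $E(\F_p)[m]\simeq\fix(\rho_m(\sigma))$; and because $\Frob(p)$ is a single conjugacy class, the isomorphism class of $\fix(\rho_m(\sigma))$ does not depend on the choice of $\sigma\in\Frob(p)$.

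For part (1), I would then partition $\rho_m(\Gal(K/\Q))$ into conjugacy classes $H_1,\dots,H_r$ (equivalently, partition $\Gal(K/\Q)$ into its conjugacy classes) and write
\[
\Prob\bigl(E(\F_p)[m]\simeq T\bigr)=\sum_{\substack{i\\ \fix(\rho_m(H_i))\simeq T}}\Prob(\Frob(p)=H_i)=\frac{\#\{g\in\rho_m(\Gal(K/\Q))\mid\fix(g)\simeq T\}}{\#\Gal(K/\Q)},
\]
the first equality being the previous paragraph and the second equality being Chebotarev applied class by class. Primes of bad reduction and primes dividing $m$ form a finite (hence density zero) exceptional set, so the natural density is unaffected and the convention of the paper lets us omit them tacitly.

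For part (2), I would replay the same argument over the larger Galois extension $L=K(\zeta_n)$ of $\Q$, whose Galois group surjects onto $(\Z/n\Z)^\times\simeq\Gal(\Q(\zeta_n)/\Q)$ by restriction. For primes $p\nmid n$ (again a density zero exclusion), the condition $p\equiv a\pmod n$ is equivalent to the restriction of any $\sigma\in\Frob_L(p)$ sending $\zeta_n$ to $\zeta_n^a$, i.e.\ to $\Frob_L(p)\subset G_a$. Using that $\fix(\rho_m(\sigma_{|K}))$ depends only on the restriction to $K$, Chebotarev applied to $L/\Q$ gives the conditional density
\[
\Prob\bigl(E(\F_p)[m]\simeq T\mid p\equiv a\bmod n\bigr)=\frac{\#\{\sigma\in G_a\mid\fix(\rho_m(\sigma_{|K}))\simeq T\}/\#\Gal(L/\Q)}{\#G_a/\#\Gal(L/\Q)},
\]
and the factors $\#\Gal(L/\Q)$ cancel, leaving the stated formula.

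The main obstacle is really the first paragraph: ensuring that the canonical identifications (the reduction $r_m^{(\mathfrak{p})}$, the maps $\iota_m,\iota_m^{(p)}$, and the isomorphism $\alpha^{(\mathfrak{p})}$) genuinely commute so that the subgroup $E(\F_p)[m]$ fixed by the geometric Frobenius on $E(\overline{\F_p})[m]$ matches, up to isomorphism, the subgroup $\fix(\rho_m(\sigma))$ for any $\sigma\in\Frob(p)$. Once this compatibility and the conjugation invariance of $\fix$ are in place, both statements are essentially formal consequences of Chebotarev.
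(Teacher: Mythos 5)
Your proposal is correct and follows essentially the same route as the paper: both reduce part (1) to Chebotarev via the commutative diagram identifying $E(\F_p)[m]$ with $\fix(\rho_m(\Frob(p)))$ and a decomposition into conjugacy classes, and both handle part (2) by passing to $K(\zeta_n)$, identifying the congruence condition $p\equiv a\bmod n$ with $\Frob(K(\zeta_n),p)\subset G_a$ (up to a density-zero set of primes), and taking the ratio of the two Chebotarev densities. The only difference is cosmetic: the paper spells out the exceptional primes (those dividing the norms of $\zeta_n^c-1$) and the conjugacy classes $\tilde C_i$ explicitly, which you leave implicit.
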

\begin{proof}
($1$) Let $p \nmid m$ be a prime for which $E$ has good reduction and let
$\mathfrak{p}$ be a prime ideal of $K$ over $p$.
We abbreviate $H=\{\sigma \in \Gal(K/\Q) \mid \fix(\iota_m(\sigma))
 \simeq T\}$.
First note that $E(\F_p)[m]=\fix(\iota_m^{(p)}(\phi_p))$ where $\phi_p$ is
the Frobenius in $\Gal(\F_p(E[m])/\F_p)$.
Since the diagram
\[
\xymatrix{\Dec({\mathfrak{p}\text{ }})  \ar@{^{(}->}[r]  \ar[d]^{\alpha^{(\mathfrak{p})}} &
           \Gal(\Q(E[m])/\Q)\text{ }\ar@{^{(}->}[r]^{\iota_m}                         &
         \Aut(E(\overline{\Q})[m]) \ar[d]^{r_m^{(\mathfrak{p})}}          \\
         \Gal(k_{\mathfrak{p}}/\F_p) \ar[r]^(0.4){\sim}                 &
         \Gal(\F_p(E[m])/\F_p)\text{ }\ar@{^{(}->}[r]^{\iota_m^{(p)}}                 &
          \Aut(E(\overline{\F_p})[m])                                \\
}
\]
is commutative and since $\Frob(p) \subset \Gal(K/\Q)$
is the conjugacy class generated by
$(\alpha^{(\mathfrak{p})})^{-1}(\phi_{\mathfrak{p}})$
we have $E(\F_p)[m] \simeq \fix(\iota_m(\Frob(p)))$.

Decompose $H$ into a disjoint union of conjugacy classes $C_1,\ldots, C_N$.
Then $\fix(\iota_m(\Frob(p))) \simeq T$ is equivalent to
$\Frob(p)$ being one of the $C_i$.
Thanks to Theorem~\ref{Chebotarev} we obtain: 
\[
\Prob(E(\F_p)[m]\simeq T)  =  \sum_{i=1}^N\Prob(\Frob(p)=C_i) =
\sum_{i=1}^N\frac{\#C_i}{\#\Gal(K/\Q)} =  \frac{\#H}{\#\Gal(K/\Q)}.
\]

\noindent ($2$) Using similar arguments as in ($1$) we have to evaluate
\[
\frac{\Prob(\Frob(p) \in \{C_1, \ldots, C_N\}, p\equiv a \bmod n)}
     {\Prob(p\equiv a \bmod n)}.
\]
Let $p$ be a prime and $\mathfrak{p}$ a prime ideal as
in the first part of the proof, and let
$\mathfrak{P}$ be a prime ideal of $K(\zeta_n)$ lying over $\mathfrak{p}$.
Furthermore let $\tilde{C}_1, \ldots, \tilde{C}_{\tilde{N}}$ be the conjugacy
classes of $\Gal(K(\zeta_n)/\Q)$ that are in the pre-images of
$C_1, \ldots, C_N$ and whose elements $\sigma$ satisfy
$\sigma(\zeta_n)=\zeta_n^a$.
Since $\Gal(K(\zeta_n)/\Q)$ maps $\zeta_n$ to primitive $n$th
roots of unity we have for
$\sigma\in(\alpha^{(\mathfrak{P})})^{-1}(\phi_{\mathfrak{P}})$ that
$\sigma(\zeta_n)=\zeta_n^b$ holds for some $b$.
Together with $\sigma(x)\equiv x^p\bmod \mathfrak{P}$ we get
$\zeta_n^b\equiv \zeta_n^p\bmod \mathfrak{P}$.
If we exclude the finitely many primes dividing the norms of $\zeta_n^c-1$ for
$c=1, \ldots, n-1$ we obtain $b\equiv p\bmod n$.
Since $\Frob(K(\zeta_n),p)$, the Frobenius conjugacy class for $K(\zeta_n)$,
is the pre-image of $\Frob(p)$, we get with the argument above
$\ds
\Prob(\Frob(p) \in \{C_1, \ldots, C_N\}, p\equiv a \bmod n)=
\Prob(\Frob(K(\zeta_n),p) \in \{\tilde{C}_1, \ldots, \tilde{C}_{\tilde{N}}\}).
$
A similar consideration for the denominator $\Prob(p\equiv a \bmod n)$ completes
the proof.
\end{proof}

\begin{rem}\label{BrierClavier} Put $K=\Q(E[m])$. If $[K(\zeta_n):\Q(\zeta_n)]=[K:\Q]$, then one has
$\Prob(E(\F_p)[m] \simeq T \mid p\equiv a \bmod n) = 
                                          {\Prob(E(\F_p)[m] \simeq T)}$
for $a$ coprime to $n$. Indeed, according to Galois theory,
 $\Gal(K(\zeta_n)/\Q)/\Gal(K(\zeta_n)/K) \simeq \Gal(K/\Q)$ through
$\overline{\sigma}\mapsto \sigma_{|K}$. Since
$[K(\zeta_n):\Q(\zeta_n)]=[K:\Q]$, we have $[K(\zeta_n):K]=\varphi(n)$ and
therefore each element $\sigma$ of
$\Gal(K/\Q)$ extends in exactly one way to an element of
$\Gal(K(\zeta_n)/\Q)$ which satisfies $\sigma(\zeta_n)=\zeta_n^a$. Note that
for $n\in\{3,4\}$ the condition is equivalent to $\zeta_n\not\in K$. 

The families constructed by Brier and Clavier~\cite{BrierC10}, which are
dedicated to integers $N$ such that the $n$th cyclotomic polynomial has
roots modulo all prime factors of $N$, modify $[K(\zeta_n)~:~\Q(\zeta_n)]$ by
imposing a large torsion subgroup over $\Q(\zeta_n)$.
\end{rem}

An important particular case of Theorem \ref{main} is as follows:
\begin{cor}\label{pi}
Let $E$ be an elliptic curve and $\pi$ be a prime number. Then,
\begin{enumerate}[]
\item $\ds\Prob(E(\F_p)[\pi] \simeq \Z/\pi\Z) = 
\frac {\#\{g \in \rho_\pi(\Gal(\Q(E[\pi])/\Q)) 
      \mid 
           \det(g-\Id)=0, g\neq \Id
      \}}
      {\#\Gal(\Q(E[\pi])/\Q)}, $
\item $\ds\Prob(E(\F_p)[\pi] \simeq \Z/\pi\Z \times \Z/\pi\Z) = 
        \frac{1}{\#\Gal(\Q(E[\pi])/\Q)}.$
\end{enumerate}
\end{cor}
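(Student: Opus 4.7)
The plan is to specialize Theorem~\ref{main}(1) to $m=\pi$ and $T\in\{\Z/\pi\Z,\,\Z/\pi\Z\times\Z/\pi\Z\}$, then rewrite the condition ``$\fix(g)\simeq T$'' in terms of the linear-algebraic data of $g\in\GL_2(\Z/\pi\Z)$. The key observation is that $(\Z/\pi\Z)^2$ is a $2$-dimensional vector space over $\F_\pi$, and for $g\in\GL_2(\Z/\pi\Z)$ the fixed set $\fix(g)$ coincides with the kernel of the $\F_\pi$-linear map $g-\Id$. Since $\pi$ is prime, every subgroup of $(\Z/\pi\Z)^2$ is an $\F_\pi$-subspace, so the only possibilities for $\fix(g)$ up to isomorphism are $\{0\}$, $\Z/\pi\Z$, and $\Z/\pi\Z\times\Z/\pi\Z$, corresponding respectively to $\ker(g-\Id)$ having dimension $0$, $1$, or $2$.

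For part~(2), $\fix(g)\simeq\Z/\pi\Z\times\Z/\pi\Z$ is equivalent to $\ker(g-\Id)$ having dimension $2$, i.e.\ $g-\Id=0$, i.e.\ $g=\Id$. Thus the numerator in the formula from Theorem~\ref{main}(1) equals $1$, and the claimed probability $1/\#\Gal(\Q(E[\pi])/\Q)$ follows immediately. (One notes that $\Id$ always lies in $\rho_\pi(\Gal(\Q(E[\pi])/\Q))$, so the count is genuinely $1$ and not $0$.)

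For part~(1), $\fix(g)\simeq\Z/\pi\Z$ is equivalent to $\ker(g-\Id)$ having dimension exactly $1$. By rank-nullity this is equivalent to $g-\Id$ being a rank-$1$ endomorphism of the $2$-dimensional space, which is in turn equivalent to the two conditions $\det(g-\Id)=0$ (so $g-\Id$ is singular, hence of rank at most $1$) together with $g\neq\Id$ (so $g-\Id$ has rank at least $1$). Plugging this characterization into the formula of Theorem~\ref{main}(1) with $K=\Q(E[\pi])$ yields exactly the claimed expression.

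There is essentially no hard step: the whole corollary is just a restatement of Theorem~\ref{main}(1) once one identifies subgroups of $(\Z/\pi\Z)^2$ with $\F_\pi$-subspaces and translates ``dimension of the fixed space'' into ``rank of $g-\Id$''. The only point worth double-checking is the equivalence between ``$\ker(g-\Id)$ is $1$-dimensional'' and the conjunction ``$\det(g-\Id)=0$ and $g\neq\Id$'', which is immediate in dimension $2$ over a field.
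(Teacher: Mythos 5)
Your proof is correct and takes essentially the same route as the paper's own (suppressed) proof, which likewise reduces to the observations that $\fix(g)=\ker(g-\Id)$ is a line if and only if $1$ is an eigenvalue of $g$ and $g\neq\Id$, and that the identity is the only element fixing all of $(\Z/\pi\Z)^2$. No issues.
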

\begin{ex}
Let us compute these probabilities for the curves $E_1 : y^2=x^3+5x+7$ and 
$E_2 : y^2=x^3-11x+14$ and the primes $\pi=3$ and $\pi=5$. Here $E_1$ 
illustrates the generic case, whereas $E_2$ has special Galois groups. One 
checks with Sage~\cite{sage} that $[\Q(E_1[3]):\Q]=48$ 
and $\#\GL_2(\Z/3\Z)=48$. By Proposition~\ref{inj} we deduce that
$\rho_3(\Gal(\Q(E_1[3])/\Q))=\GL_2(\Z/3\Z)$. 
A simple computation shows that $\GL_2(\Z/3\Z)$ contains
$21$ elements having $1$ as eigenvalue, one of which is
$\Id$. Corollary \ref{pi} gives the following
probabilities: ${\Prob(E_1(\mathbb{F}_p)[3]\simeq \Z/3\Z)}=\frac{20}{48}$ and 
$\Prob(E_1(\mathbb{F}_p)[3]\simeq
\Z/3\Z\times \Z/3\Z)=\frac{1}{48}$. We used the same method for all the 
probabilities of Table \ref{ex1}, where we compare them to experimental
values.

\begin{table}[t]
  \begin{center}
	  \begin{tabular}{lc|r|r}
     & & \multicolumn{1}{c|}{$E_1$} & \multicolumn{1}{c}{$E_2$} \\
		\hline
    $\#\GL_2(\Z/3\Z)$ & & \multicolumn{2}{c}{$48$} \\
    \hline
    $\#\Gal(\Q(E[3])/\Q)$ & & \multicolumn{1}{c|}{$48$} & 
                              \multicolumn{1}{c}{$16$}\\
    \hline
    \multirow{2}{*}{$\Prob(E(\F_p)[3] \simeq \Z/3\Z \times \Z/3\Z)$} & Th. 
      \rule[-0.15cm]{0cm}{0.5cm} 
      & $\frac{1}{48} \approx 0.02083$ & $\frac{1}{16} = 0.06250$  \\
    & Exp. & $0.02082$ & $0.06245$ \\
    \hline
    \multirow{2}{*}{$\Prob(E(\F_p)[3] \simeq \Z/3\Z)$} & Th. 
      \rule[-0.15cm]{0cm}{0.5cm} 
      & $\frac{20}{48} \approx 0.4167 $ & $\frac{4}{16}=0.2500$ \\
    & Exp & $0.4165$ &  $0.2501$ \\
    \hline
    \hline
    $\#\GL_2(\Z/5\Z)$ & & \multicolumn{2}{c}{$480$} \\
    \hline
    $\#\Gal(\Q(E[5])/\Q)$ & & \multicolumn{1}{c|}{$480$} & 
                              \multicolumn{1}{c}{$32$}\\
    \hline
    \multirow{2}{*}{$\Prob(E(\F_p)[5] \simeq \Z/5\Z \times \Z/5\Z)$} & Th. 
      \rule[-0.15cm]{0cm}{0.5cm} 
      & $\frac{1}{480} \approx 0.002083$ & $\frac{1}{32} = 0.03125$ \\
    & Exp. & $0.002091$ & $0.03123$ \\
    \hline
    \multirow{2}{*}{$\Prob(E(\F_p)[5] \simeq \Z/5\Z)$} & Th.
      \rule[-0.15cm]{0cm}{0.5cm} 
      & $\frac{114}{480} = 0.2375 $ & $\frac{10}{32} = 0.3125$\\
    & Exp. & $0.2373$ &  $0.3125$ \\
		\end{tabular}
	\end{center}

	\caption{Comparison of the theoretical values (Th) of Corollary
\ref{pi} to the experimental results for all primes below $2^{25}$ (Exp). }
  \label{ex1}

\end{table}

Note that the relative difference 
between theoretical and experimental values never exceeds $0.4\%$. 
It is interesting to observe that reducing the Galois group
does not necessarily increase the probabilities, as it is shown for $\pi=3$.
\end{ex}

\subsection{Effective Computations of $\Q(E[m])$ and
$\rho_m(\Gal(\Q(E[m])/\Q))$ for Prime Powers.} \label{computepoldiv}
The main tools are the division polynomials as defined below. 
\begin{deff}\label{defdivpol} Let $E:y^2=x^3+ax+b$ be an elliptic curve over $\Q$ and $m\geq
2$ an integer. The $m$-division polynomial $P_m$ is defined as the monic polynomial
whose roots are the $x$-coordinates of all the $m$-torsion affine points.
$P_m^{\new}$ is defined as the monic polynomial whose roots are the $x$-coordinates
of the affine points of order exactly $m$.
\end{deff}

\begin{prop}\label{divpol} For all $m\geq 2$ we have:
\begin{enumerate}
\item $P_m,P_m^{\new} \in \Q[X]$;
\item $\deg(P_m)=\frac{(m^2+2-3\eta)}{2}$, where $\eta$ is the remainder of $m$ 
modulo $2$. 
\end{enumerate}
\end{prop}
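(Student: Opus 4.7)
The plan is to prove rationality and the degree formula separately, using Galois-theoretic inputs for the former and a simple point count for the latter.

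For part (1), I would establish rationality of $P_m$ first and then deduce it for $P_m^{\new}$. The cleanest route is Galois-theoretic: the subscheme $E[m]\setminus\{O\}$ is defined over $\Q$, so the set of $x$-coordinates of its $\overline{\Q}$-points is stable under $\Gal(\overline{\Q}/\Q)$, and hence the monic polynomial with exactly those roots has Galois-invariant coefficients and lies in $\Q[X]$. (Alternatively, one could invoke the classical recursion producing division polynomials $\psi_m\in\Z[a,b][X]$ and read off the same conclusion.) To pass to $P_m^{\new}$, I would observe that every affine $m$-torsion point has a unique exact order $d$ with $2\leq d\mid m$, and that two affine points with the same $x$-coordinate differ only by sign and therefore share this exact order. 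This gives a clean factorization
\[
P_m=\prod_{\substack{d\mid m\\ d\geq 2}} P_d^{\new}
\]
into pairwise coprime factors. Strong induction on $m$ then yields
$P_m^{\new}=P_m/\prod_{2\leq d<m,\ d\mid m} P_d^{\new}\in\Q[X]$.

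For part (2), I would count affine $m$-torsion points and pass to $x$-coordinates. Since $E(\overline{\Q})[m]\simeq(\Z/m\Z)^2$, there are exactly $m^2-1$ affine $m$-torsion points, and the map $(x,y)\mapsto x$ is two-to-one on those with $y\neq 0$ and one-to-one on those with $y=0$, the latter being precisely the three nonzero $2$-torsion points of $E$. When $m$ is odd, $\gcd(m,2)=1$ forces $E[m]$ to contain no nontrivial $2$-torsion, the map is uniformly two-to-one, and $\deg(P_m)=(m^2-1)/2$, matching $(m^2+2-3\eta)/2$ at $\eta=1$. When $m$ is even, the three nonzero $2$-torsion points lie in $E[m]$ and contribute three distinct $x$-coordinates, while the remaining $m^2-4$ points pair up to contribute $(m^2-4)/2$ more, giving a total of $(m^2+2)/2$, matching $\eta=0$.

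The proof is essentially classical and I do not expect a serious obstacle, but the one subtle point deserving care is the coprimality of the $P_d^{\new}$ appearing in the factorization above: it is needed both to make the inductive rationality argument go through and to avoid double-counting in the degree check. As noted, coprimality is immediate from the uniqueness of exact order together with the observation that $P$ and $-P$ share both $x$-coordinate and order, so no genuine difficulty arises.
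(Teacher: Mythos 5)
Your proof is correct. Note, though, that the paper does not actually prove Proposition~\ref{divpol}: it simply refers the reader to the standard reference \cite{BlSeSm99}, where the result is obtained from the explicit recursive construction of the division polynomials $\psi_m\in\Z[a,b][x,y]$, whose degrees and leading coefficients are computed by induction on the recursion. Your route is genuinely different and self-contained: rationality comes from Galois stability of the set of $x$-coordinates of $E[m]\setminus\{O\}$ (plus the coprime factorization $P_m=\prod_{d\mid m,\,d\geq 2}P_d^{\new}$ and induction to handle $P_m^{\new}$), and the degree formula comes from counting fibres of $(x,y)\mapsto x$ on the $m^2-1$ affine $m$-torsion points, splitting into the cases $\eta=1$ (no nontrivial $2$-torsion in $E[m]$, uniformly two-to-one, degree $(m^2-1)/2$) and $\eta=0$ (three $y=0$ points with distinct $x$-coordinates since the cubic is separable, degree $(m^2+2)/2$). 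The classical recursion buys explicit polynomials one can compute with, which the paper needs later in Section~\ref{computepoldiv}; your argument buys a cleaner conceptual proof of exactly the two stated facts and correctly isolates the only delicate points, namely that $P$ and $-P$ share both $x$-coordinate and exact order (so the $P_d^{\new}$ are pairwise coprime) and that the implicit reading of the definition is as a squarefree monic polynomial.
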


\begin{proof}
For a proof we refer to \cite{BlSeSm99}.
\end{proof}

Note that one obtains different division polynomials for other shapes of
elliptic curves (Weierstrass, Montgomery, Edwards, etc.).
Nevertheless, the Galois group ${\Gal(\Q(E[m])/\Q)}$ is model independent 
and can be computed with the division polynomials of Definition 
\ref{defdivpol} as, in characteristic different from $2$ and $3$, every curve 
can be written in short Weierstrass form.

One can compute $\Q(E[\pi])$ for any prime $\pi \geq 3$ using the following method:
\begin{enumerate}[1.]
\item Make a first extension of $\Q$ through an irreducible factor of 
$P_\pi$ and obtain a number field $F_1$ where $P_\pi$ has a root $\alpha_1$. 
\item Let $f_2(y)=y^2-(\alpha_1^3+a\alpha_1+b) \in F_1[y]$ and
$F_2$ be the extension of $F_1$ through $f_2$. $F_2$ contains a $\pi$-torsion
point $M_1$. In $F_2$, $P_\pi$ has $\frac{\pi-1}{2}$
trivial roots representing the $x$ coordinates of the multiples of $M_1$. 
\item Call $F_3$ the extension of $F_2$ through an irreducible factor of 
$P_\pi \in F_2[x]$ other than those corresponding to the trivial roots.
\item Let $\alpha_2$ be the new root of $P_\pi$ in $F_3$. Let 
$f_4(y)=y^2-(\alpha_2^3+a\alpha_2+b) \in F_3[y]$ and $F_4$ be the extension 
of $F_3$ through $f_4$. $F_4$ contains all the $\pi$-torsion. 
\end{enumerate}

The case of prime powers $\pi^k$ with $k\geq2$ is handled recursively.
Having computed $\Q(E[\pi^{k-1}])$, we obtain $\Q(E[\pi^k])$  by repeating the 4
steps above with $P_{\pi^k}^{\new}$ instead of $P_\pi$ and by 
considering as trivial roots all the $x$-coordinates of the points 
$\{P+M_1 \mid P \in E[\pi^{k-1}]\}$.

In practice, we observe that in general $P_\pi,f_2,P_\pi^{(F_2)}$ and $f_4$ are 
irreducible, where $P_\pi^{(F_2)}$ is $P_\pi$ divided by the factors
corresponding to the trivial roots. If this is the case, as 
$\deg(P_\pi)=\frac{\pi^2-1}{2}$ (Proposition \ref{divpol}),
the absolute degree of $F_4$ is $\frac{\pi^2-1}{2}\cdot 2\cdot
\frac{\pi^2-\pi}{2}\cdot 2=(\pi-1)^2(\pi+1)\pi$. By Remark \ref{d}, 
\text{$\#\GL_2(\Z/\pi\Z)=(\pi-1)^2(\pi+1)\pi$}, therefore in general we expect
$\rho_\pi(\Gal(\Q(E[\pi])/\Q))=\GL_2(\Z/\pi\Z)$. 
Also, we observed that in general the degree of the extension 
$\Q(E[\pi^k]) / \Q(E[\pi^{k-1}])$ is $\pi^4$.

Serre \cite{Se71} proved that the observations above are almost always true. 
The next theorem is a restatement of items $(1)$ and $(6)$ in the introduction 
of \cite{Se71}.
\begin{theo}[Serre] \label{Serre} Let $E$ be an elliptic curve without
complex multiplication.
\begin{enumerate}
\item For all primes $\pi$ and $k \geq 1$ the index
$[\GL_2(\Z/\pi^k\Z) : \rho_{\pi^k}(\Gal(\Q(E[\pi^k])/\Q))]$ 
is non-decreasing and bounded by a constant depending on $
E$ and $\pi$.
\item For all primes $\pi$ outside a finite set depending on $E$ and for all
$k\geq1$,\\ $\rho_{\pi^k}(\Gal(\Q(E[\pi^k])/\Q)=\GL_2(\Z/\pi^k\Z)$.
\end{enumerate}
 \end{theo}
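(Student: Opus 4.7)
The plan is to pass from the finite-level representations to the pro-$\pi$ picture. Let $T_\pi(E)=\varprojlim_k E[\pi^k]$ be the $\pi$-adic Tate module, a free $\Z_\pi$-module of rank $2$, and write $\rho_\pi\colon \Gal(\overline{\Q}/\Q)\to \GL_2(\Z_\pi)$ for the induced continuous representation. Set $G_k=\rho_{\pi^k}(\Gal(\Q(E[\pi^k])/\Q))$ and $G_\pi=\rho_\pi(\Gal(\overline{\Q}/\Q))$; then $G_k$ is the image of $G_\pi$ under the reduction map $\GL_2(\Z_\pi)\twoheadrightarrow \GL_2(\Z/\pi^k\Z)$, because $\Q(E[\pi^k])$ is by construction the fixed field of $\ker\rho_{\pi^k}$.

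For (1), I would first handle the \emph{non-decreasing} part by a direct counting argument. Let $N_k$ denote the kernel of the reduction map $\GL_2(\Z/\pi^{k+1}\Z)\to \GL_2(\Z/\pi^k\Z)$; Remark~\ref{d} gives $|N_k|=\pi^4$. Since $G_{k+1}$ surjects onto $G_k$ with kernel $G_{k+1}\cap N_k$, a short manipulation yields
\[
[\GL_2(\Z/\pi^{k+1}\Z):G_{k+1}]
= \frac{\pi^4}{|G_{k+1}\cap N_k|}\cdot [\GL_2(\Z/\pi^k\Z):G_k],
\]
and the multiplicative factor is at least $1$. For \emph{boundedness}, the deep ingredient is Serre's open image theorem: for $E$ without complex multiplication, $G_\pi$ is an open subgroup of $\GL_2(\Z_\pi)$. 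Open subgroups of the $\pi$-adic Lie group $\GL_2(\Z_\pi)$ have finite index, and that finite index majorises every $[\GL_2(\Z/\pi^k\Z):G_k]$.

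For (2), the target is $G_\pi=\GL_2(\Z_\pi)$ for $\pi$ outside a finite exceptional set. I would first establish the mod-$\pi$ surjectivity $G_1=\GL_2(\F_\pi)$ for all but finitely many $\pi$ by eliminating each maximal proper subgroup of $\GL_2(\F_\pi)$ as a possible image. The Weil pairing forces the determinant of $\rho_\pi$ to be the full mod-$\pi$ cyclotomic character, so it suffices to control the projective image in $\mathrm{PGL}_2(\F_\pi)$; its maximal subgroups fall into the classical Dickson list (Borel, normalizer of a split or non-split Cartan, and exceptional images with projective image $A_4$, $S_4$ or $A_5$). For large $\pi$ these are ruled out in turn using inertia at a prime of bad reduction (against Borel for semi-stable $E$), Frobenius trace congruences together with the CM/non-CM dichotomy (against the Cartan normalisers), and a degree count combined with the fixed order of the projective exceptional groups (against $A_4, S_4, A_5$). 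Once the mod-$\pi$ surjectivity is in hand, I would lift inductively: for $\pi\ge 5$, any closed subgroup of $\GL_2(\Z_\pi)$ that surjects onto $\GL_2(\F_\pi)$ is already all of $\GL_2(\Z_\pi)$, because $\mathrm{SL}_2(\F_\pi)$ is perfect and the kernel of reduction is a uniform pro-$\pi$ group whose Frattini quotient carries the irreducible adjoint representation of $\mathrm{SL}_2(\F_\pi)$ on $\mathfrak{sl}_2(\F_\pi)$.

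The hard part is clearly the mod-$\pi$ surjectivity for almost all $\pi$: eliminating the Cartan-normaliser case and the exceptional projective images requires genuinely elliptic-curve input, notably the Hasse bound $|\mathrm{tr}\,\Frob(p)|\le 2\sqrt{p}$ combined with ramification behaviour at primes of bad reduction and a rigidity theorem distinguishing CM from non-CM curves. This is the technical heart of Serre's paper~\cite{Se71}; the $\pi$-adic lifting and the counting argument in (1) are by comparison soft.
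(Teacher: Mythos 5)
The paper gives no proof of this theorem at all: it is explicitly a restatement of items (1) and (6) of the introduction of \cite{Se71}, and the authors simply cite Serre. So there is no in-paper argument to compare yours against. Judged on its own terms, your outline is a faithful summary of Serre's actual strategy. The soft parts are correct and complete as stated: the identification of $\rho_{\pi^k}(\Gal(\Q(E[\pi^k])/\Q))$ with the image of the Tate-module representation under reduction, the counting identity
\[
[\GL_2(\Z/\pi^{k+1}\Z):G_{k+1}]=\frac{\pi^4}{\#(G_{k+1}\cap N_k)}\,[\GL_2(\Z/\pi^k\Z):G_k]\geq [\GL_2(\Z/\pi^k\Z):G_k],
\]
which proves monotonicity, and the reduction of part (2) to mod-$\pi$ surjectivity via the lifting lemma for $\pi\geq 5$ (the same lemma the paper later quotes from \cite{Se81}). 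Two caveats. First, for the boundedness in part (1) you invoke ``Serre's open image theorem,'' but openness of $G_\pi$ in $\GL_2(\Z_\pi)$ \emph{is} the boundedness statement of part (1); you have translated the claim into an equivalent form rather than proved it, so this step is a citation in disguise — which is exactly what the paper does, but you should be aware it is not an independent argument. Second, your elimination of the Dickson subgroups (Borel, Cartan normalisers, exceptional projective images) is a plan rather than a proof; you correctly flag that this is the technical heart and that it needs the Hasse bound, ramification input, and the CM dichotomy, but as written none of those cases is actually closed. As a blind reconstruction of what \cite{Se71} proves and how, your proposal is accurate; as a self-contained proof it is not, and neither is the paper's.
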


\begin{deff} 
Put $I(E,\pi,k)=[\GL_2(\Z/\pi^k\Z):\rho_{\pi^k}(\Gal(\Q(E[\pi^k])/\Q))]$. If $E$
does not admit complex multiplication, we call Serre's exponent the integer
$n(E,\pi)=\min\{n\in\mathbb{N}^* \mid \forall k\geq n,
I(E,\pi,k+1)=I(E,\pi,k)\}$.
\end{deff}

In \cite{Se81} Serre showed that in some cases one can prove that
$I(E,\pi,k) = 1$ for all positive integers $k$. Indeed, Serre proved that the
surjectivity of $\rho_{\pi^k}$ (or equivalently $I(E,\pi,k)= 1$) follows from
the surjectivity of $\rho_{\pi}$ (or equivalently $I(E,\pi,1)=1$) for all
rational elliptic curves $E$ without complex multiplication and for all primes
$\pi \geq 5$.
In order to have the same kind of results for $\pi=2$
(resp. $\pi=3$) one has to suppose that $\rho_2$, $\rho_4$ and $\rho_8$ are 
surjective (resp. $\rho_3$ and $\rho_9$ are surjective).

Serre also conjectured that only a finite number of primes, not depending on the
curve $E$, can occur in the second point of Theorem \ref{Serre}. The current
conjecture is that for all rational elliptic curves without complex
multiplication and all primes $\pi \geq 37$, $\rho_\pi$ is surjective. In
\cite{Zyw11} Zywina described an algorithm that computes the primes $\pi$ for
which $\rho_\pi$ is not surjective and checked the conjecture for all elliptic
curves in Magma's database (currently this covers curves with conductor at most
$14000$). \footnote{Thanks to Andrew Sutherland for bringing this article
to our attention.}

The method described above allows us to compute $\Q(E[m])$ as an extension
tower. Then it is easy to obtain its absolute degree and a primitive element. 
Identifying $\rho_\pi(\Gal(\Q(E[m])/\Q))$ (up to conjugacy) is easy when
there is only one subgroup (up to conjugacy) of $\GL_2(\Z/m\Z)$
with the right order.
In the other case we check for each $g \in \GL_2(\Z/m\Z)$ using the fixed
generators of $E(\overline{\Q})[m]$ whether $g$ gives rise to an
automorphism on $\Q(E[m])$.
In practice, the bottleneck of this method is the factorization of
polynomials with coefficients  over number
fields.

An faster probabilistic algorithm for computing
$\Gal(\Q(E[\pi])/\Q)$ was proposed by Sutherland \cite{Su12}. This algorithm was
not known by the authors at the time of writing and would have helped to
accelerate the computation of the examples.

\subsection{Divisibility by a Prime Power.}
It is a common fact that, for a given prime $\pi$, the cardinality of an arbitrary
elliptic curve over $\F_p$ has a larger probability 
to be divisible by $\pi$ than an arbitrary integer of size $p$. In this
subsection we shall rigorously compute those probabilities under some
hypothesis of generality. 

\begin{nott} Let $\pi$ be a prime and $i,j,k\in\mathbb{N}$ such that $i\leq j$. 
We put:
\[p_{\pi,k}(i,j)=
\Prob(E(\F_p)[\pi^{k}]\simeq {\Z}/{\pi^i\Z} \times {\Z}/{\pi^j\Z}).
\]
Let $\ell \leq m$ be integers. When it is defined we denote:
\[p_{\pi,k}(\ell,m|i,j)=
\Prob(E(\F_p)[\pi^{k+1}] \simeq {\Z}/{\pi^\ell\Z} \times {\Z}/{\pi^m\Z} \mid 
E_p[\pi^{k}] \simeq {\Z}/{\pi^i\Z} \times {\Z}/{\pi^{j}\Z}).
\]
When it is clear from the context, $\pi$ is omitted.
\end{nott}

\begin{rem}\label{arbre}  
Since for every natural number $m$ and every prime $p$ coprime to $m$, 
$E(\mathbb{F}_p)[m] \subset {\Z/m\Z\times\Z/m\Z}$, we have $p_{\pi,k}(i,j)=0$ for $j>k$.
In the case $j<k$, if $p_{\pi,k}(\ell, m \mid i,j)$ is defined, it
equals $1$ if $(\ell,m)=(i,j)$ and equals $0$ if $(\ell,m) \neq (i,j)$.
Finally, for $j=k$, there are only three conditional 
probabilities which can be non-zero:
$p_{\pi,k}(i, k \mid i,k)$,
$p_{\pi,k}(i, k+1 \mid i,k)$, and
$p_{\pi,k}(k+1, k+1 \mid k,k)$.

\end{rem}

\begin{theo} \label{thprobcond}
Let $\pi$ be a prime and $E$ an elliptic curve over $\Q$.
If $k$ is an integer such that $I(E,\pi,k+1)=I(E,\pi,k)$, in
particular if $E$ has no complex multiplication and $k\geq n(E,\pi)$, then for
all $0\leq i<k$ we have:
\begin{enumerate}
\item $\ds p_{\pi,k}(k+1,k+1\mid k,k)=\frac{1}{\pi^4}$;
\item $\ds p_{\pi,k}(k,k+1\mid k,k)=\frac{(\pi-1)(\pi+1)^2}{\pi^4}$;
\item $\ds p_{\pi,k}(i,k+1\mid i,k)=\frac{1}{\pi}$.
\end{enumerate}
\end{theo}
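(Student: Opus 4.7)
The plan is to combine Theorem~\ref{main} with a direct computation of fixed subgroups under the reduction map $G_{k+1}\twoheadrightarrow G_k$, where $G_m:=\rho_{\pi^m}(\Gal(\Q(E[\pi^m])/\Q))$. The hypothesis $I(E,\pi,k+1)=I(E,\pi,k)$ enters only once, at the start: it forces the fibre of $G_{k+1}\to G_k$ over every $g$ to have size $\pi^4$, hence to coincide with the full kernel $\{I+\pi^k N:N\in M_2(\F_\pi)\}$ of $\GL_2(\Z/\pi^{k+1}\Z)\to\GL_2(\Z/\pi^k\Z)$. Fixing any lift $g_0$, the conditional distribution of $\tilde g$ given $g$ is then uniform on $\{g_0+\pi^k N:N\in M_2(\F_\pi)\}$, and by Theorem~\ref{main} each conditional probability $p_{\pi,k}(\ell,m\mid i,j)$ reduces to counting $N$'s with prescribed $\fix$-type. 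Throughout, I will exploit the fact that this count depends only on the $\GL_2(\Z/\pi^k\Z)$-conjugacy class of $g$ (conjugation bijects fibres and preserves $\fix$-types up to isomorphism), which lets me put $g$ in a convenient normal form.

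For items (1) and (2), $\fix(g)\simeq(\Z/\pi^k\Z)^2$ forces $g=I$, so $(\tilde g-I)v=\pi^k Nv$ and the fix equation becomes $\bar N\bar v=0$ in $\F_\pi^2$. Then $\fix(\tilde g)$ is the preimage of $\ker\bar N$ under reduction $(\Z/\pi^{k+1}\Z)^2\twoheadrightarrow\F_\pi^2$, and its isomorphism class depends only on $\operatorname{rank}(\bar N)$: rank $0$ gives $(k+1,k+1)$, rank $1$ gives $(k,k+1)$, and rank $2$ gives $(k,k)$. Counting the $1$, $(\pi-1)(\pi+1)^2$, and $\pi(\pi-1)^2(\pi+1)$ matrices of each rank in $M_2(\F_\pi)$ and dividing by $\pi^4$ yields (1) and (2).

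For item (3), I would first choose a basis of $(\Z/\pi^k\Z)^2$ in which $\fix(g)=\pi^{k-i}\Z/\pi^k\Z\cdot e_1\oplus\Z/\pi^k\Z\cdot e_2$, putting $g$ in the normal form $g=I+\pi^i\bigl(\begin{smallmatrix}a&0\\b&0\end{smallmatrix}\bigr)$ with $(\bar a,\bar b)\neq(0,0)$ in $\F_\pi^2$. The equation $(g_0+\pi^k N-I)v\equiv 0\pmod{\pi^{k+1}}$ then gives, using that $\bar a$ or $\bar b$ is a unit, that $v_1\equiv 0\pmod{\pi^{k-i}}$, so I write $v_1=\pi^{k-i}u_1$ and divide both rows by $\pi^{k-i}$; the system collapses (using $k>i$ to drop the $\pi^{k-i}$-terms) to the $\F_\pi$-linear system
\[
\begin{pmatrix}\bar a&\bar n_{12}\\\bar b&\bar n_{22}\end{pmatrix}\begin{pmatrix}\bar u_1\\\bar v_2\end{pmatrix}=0.
\]
Since the remaining freedom in $(u_1,v_2)$ contributes a factor $\pi^{i+k}$, $|\fix(\tilde g)|$ equals $\pi^{i+k}$ or $\pi^{i+k+1}$ according as this matrix has rank $2$ or $1$. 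Rank $1$ holds precisely when $(\bar n_{12},\bar n_{22})\in\F_\pi\cdot(\bar a,\bar b)$, which is a line of $\pi$ pairs; the remaining entries $\bar n_{11},\bar n_{21}$ are free, giving $\pi^3$ of the $\pi^4$ lifts.

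It remains to identify the type in the rank-$1$ case as $(i,k+1)$ rather than, say, $(i+1,k)$. Writing the kernel as $\bar u_1=\mu\bar v_2$ for some $\mu\in\F_\pi$, I would exhibit explicit generators $A=(\pi^{k-i}\mu,1)$ of order $\pi^{k+1}$ and, for $i\geq 1$, $B=(\pi^{k-i+1},0)$ of order $\pi^i$, and check $\langle A\rangle\cap\langle B\rangle=0$ so that $\fix(\tilde g)=\langle A\rangle\oplus\langle B\rangle\simeq\Z/\pi^{k+1}\Z\times\Z/\pi^i\Z$ as required; when $i=0$ the element $B$ vanishes and $\fix(\tilde g)=\langle A\rangle$ is cyclic of order $\pi^{k+1}$, still of type $(0,k+1)$. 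I expect the main technical nuisance to be the normal-form step: verifying it is attainable for every $g$ with $\fix(g)\simeq(i,k)$ (including the sub-case $\bar a\equiv 0$, $\bar b\neq 0$, where $g$ is not $\GL_2$-conjugate to a diagonal matrix) so that the count $\pi^3$ is genuinely independent of $g$, giving $p_{\pi,k}(i,k+1\mid i,k)=\pi^3/\pi^4=1/\pi$.
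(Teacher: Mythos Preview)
Your proof is correct and follows essentially the same strategy as the paper: use the index hypothesis to conclude that every $g$ in $\rho_{\pi^k}(\Gal(\Q(E[\pi^k])/\Q))$ has exactly $\pi^4$ lifts in $\rho_{\pi^{k+1}}(\Gal(\Q(E[\pi^{k+1}])/\Q))$, then count lifts with a prescribed fixed-group type and invoke Theorem~\ref{main}. Your treatment of item~(3) is in fact considerably more detailed than the paper's---the paper simply asserts that any non-identity element fixing a line of $(\Z/\pi^k\Z)^2$ has exactly $\pi^3$ lifts fixing a line of $(\Z/\pi^{k+1}\Z)^2$, whereas you explicitly put $g$ in normal form, reduce the fixed-point equation to an $\F_\pi$-linear system, and verify that the $\fix$-type of a rank-$1$ lift is $(i,k+1)$ for the same $i$; all of this is correct and fills in what the paper leaves to the reader.
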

\begin{proof}
Let $M=(\Z/\pi^k\Z)^2$. For all $g\in \GL_2(\pi M)$, 
we consider the set $\lift(g)=\{h\in \GL_2(M)\mid h_{\vert \pi M}=g\}
=\{g+\pi^{k-1} \left(\begin{smallmatrix} a & b \\ c & d\end{smallmatrix}\right)
\mid a,b,c,d\in\Z/\pi\Z\}$, whose cardinality is $\pi^4$. Since
$I(E,\pi,k+1)=I(E,\pi,k)$,
we have $\ds\frac{\#\Gal(\Q(E[\pi^k])/\Q)}{\#\Gal(\Q(E[\pi^{k+1}])/\Q)}=
\frac{\#\GL_2(\Z/\pi^k\Z)}{\#\GL_2(\Z/\pi^{k+1}\Z)}$, which equals
$\frac{1}{\pi^4}$ by Remark \ref{d}. So
for all $g\in \rho_{\pi^k}(\Gal(\Q(E[\pi^k])/\Q))$, $\lift(g)\subset
\rho_{\pi^{k+1}}(\Gal(\Q(E[\pi^{k+1}])/\Q))$.
Thanks to Theorem \ref{main}, the proof will follow if we count for each $g$
the number of lifts with a given fixed group.
\begin{enumerate}
\item For $g=\Id\in\rho_{\pi^k}(\Gal(\Q(E[\pi^k])/\Q)$, there is only one
element of $\lift(g)$ fixing
$(\Z/\pi^{k+1}\Z)^2$, so $p_{\pi,k}(k+1,k+1\mid k,k)= \frac{1}{\pi^4}$.

\item The element $g=\Id\in\rho_{\pi^k}(\Gal(\Q(E[\pi^k])/\Q)$, can be lifted
in exactly $\pi^4-1-\#\GL_2(\Z/\pi\Z)$ ways to elements in
$\GL_2(\Z/\pi^{k+1}\Z)$ which fix the $\pi^k$-torsion, a point of order
$\pi^{k+1}$, but not all the $\pi^{k+1}$-torsion. 
Therefore ${p_{\pi,k}(k,k+1\mid k,k)=\frac{(\pi-1)(\pi+1)^2}{\pi^4}}$.

\item Every element of $\GL_2(\Z/\pi^{k}\Z)$ which fixes a line, but is
not the identity, can be lifted in exactly $\pi^3$ ways to an
element of $\GL_2(\Z/\pi^{k+1}\Z)$ which fixes a line of
$(\Z/\pi^{k+1}\Z)^2$.
So ${p_{\pi,k}(i,k+1\mid i,k) = \frac{\pi^3}{\pi^4}=\frac{1}{\pi}}$.
\end{enumerate}
\end{proof}

The theorem below uses the information on $\Gal(\Q(E[\pi^{n(E,\pi)}])/\Q)$ for 
a given prime $\pi$ in order to
compute the probabilities of divisibility by any power of $\pi$. 

\begin{nott}


Let $\pi$ be a prime and 
$\displaystyle{\gamma_n(h) =\pi^n\sum_{\ell=0}^{h}{\pi^\ell p_n(\ell,n)}}$.
We also define
\[
\delta(k)= \begin{cases}
          p_{i+1}(i+1,i+1) & \textrm{if}\; k=2i+1\\ 
          0 & \text{otherwise}
\end{cases}, \, 
S_k(h) =\pi^k\left(\sum_{\ell=h}^{\lfloor \frac{k}{2}\rfloor}
    p_{k-\ell}(\ell,k-\ell)+ \delta(k) \right).
\]
\end{nott}

\begin{theo} \label{thprobetval}
Let $\pi$ be a prime, $E$ an elliptic curve over $\Q$, and $n$ be a positive
integer such that $\forall k \geq n$, $I(E,\pi,k)=I(E,\pi,n)$ (\textit{e.g.}, a
curve without complex multiplication and $n\geq n(E,\pi)$). 
Then, for any $k\geq1$,
\[\Prob(\pi^k\mid
\#E(\mathbb{F}_p))=\begin{cases}
\ds\frac{S_k(0)}{\pi^k} & \textrm{if } 1 \leq k\leq n, \\
\ds\frac{1}{\pi^k}(\gamma_n(k-n-1)+S_k(k-n)) & \textrm{if } n < k\leq 2n, \\
\ds\frac{1}{\pi^k}(\gamma_n(n)+p_n(n,n)\pi^{2n-1}-\frac{\pi^{4n-1}p_n(n,n)}{\pi^k})
& \textrm{if } k>2n.
\end{cases} 
\]
Let $\overline{v_\pi}$ be the average valuation of $\pi$ of $\#E(\F_p)$ for 
an arbitrary prime $p$. Then,
\[
\overline{v_\pi}=2\sum_{\ell=1}^{n-1}{p_\ell(\ell,\ell)}
                 +\frac{\pi}{\pi-1}\sum_{\ell=0}^{n-1}{p_n(\ell,n)}
                 +\sum_{\ell=0}^{n-2}{\sum_{i=\ell+1}^{n-1}{p_i(\ell,i)}}
                 +\frac{\pi(2\pi+1)}{(\pi-1)(\pi+1)}p_n(n,n).
\]
\end{theo}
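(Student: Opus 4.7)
The starting identity is
\[
\Prob(\pi^k \mid \#E(\F_p)) \;=\; \sum_{\substack{0 \le i \le j \le k \\ i+j \ge k}} p_k(i,j),
\]
which holds because, if the $\pi$-Sylow subgroup of $E(\F_p)$ has type $(a,b)$ with $a \le b$, then $\pi^k \mid \#E(\F_p)$ iff $a+b \ge k$, and in that case $E(\F_p)[\pi^k] \simeq \Z/\pi^{\min(a,k)}\Z \times \Z/\pi^{\min(b,k)}\Z$ still satisfies $\min(a,k)+\min(b,k) \ge k$. For $k \le n$ the stated formula is obtained from this identity by reorganizing the sum along anti-diagonals: the telescoping identity $p_j(\ell,j) = \sum_{b=j}^{k} p_k(\ell,b)$ (valid for $\ell < j \le k$, together with a similar collapse of the diagonal case $\ell = j$ into $\sum_{i' \ge \ell,\,j' \ge i'} p_k(i',j')$) regroups the terms with fixed $i = \ell$ into a single $p_{k-\ell}(\ell,k-\ell)$, and the residual ``above the diagonal'' contribution for odd $k$ is exactly $\delta(k)$.

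For $k > n$, the plan is to propagate the level-$n$ data to higher levels by iterating the transition rules of Theorem~\ref{thprobcond}. The easy branch starts from states $(i,n)$ with $i<n$: only the two non-trivial transitions $(i,n+s) \to (i,n+s)$ and $(i,n+s) \to (i,n+s+1)$ are available, and iterating gives
\[
p_{n+t}(i,n+s) = p_n(i,n)\cdot\frac{\pi-1}{\pi^{s+1}}\ (s<t), \qquad p_{n+t}(i,n+t) = \frac{p_n(i,n)}{\pi^t}.
\]
The harder branch starts from $(n,n)$: the three-way split (with probabilities $(\pi-1)^2(\pi+1)/\pi^3$, $(\pi-1)(\pi+1)^2/\pi^4$, $1/\pi^4$) is inherited recursively by every diagonal state $(n+r,n+r)$, so tracking whether the trajectory stops on the diagonal or drifts off produces an explicit two-parameter family of Sylow probabilities $q(n+r,n+r)$ and $q(n+r,n+r+u)$, each a product of geometric factors in $\pi^{-4}$ (diagonal length $r$) and $\pi^{-1}$ (off-diagonal drift $u$) times $p_n(n,n)$. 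Substituting these back into the starting identity and regrouping, the pairs with $i<n$ give the $\gamma_n$-piece of cases~2 and~3 via a short geometric series in $b-n$, and the pairs with $i \ge n$ combine with the level-$\le n$ remainder to yield the $S_k(k-n)/\pi^k$ piece.

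The hard part will be case~3 ($k>2n$), where the diagonal and off-diagonal contributions from $(n,n)$ must both be summed under the constraint $a+b \ge k$. This gives two geometric series in $r$ and $u$, truncated by $2r+u \ge k-2n$; combining them produces the three-term closed form in which the subtractive $\pi^{4n-1}p_n(n,n)/\pi^{2k}$ exactly corrects for the truncation boundary. Finally, the average-valuation formula follows from $\overline{v_\pi} = \sum_{k \ge 1}\Prob(\pi^k \mid \#E(\F_p))$: the $k > 2n$ tail is a geometric sum, and adding the finite contributions from cases~1 and~2 and regrouping by $p_j(\ell,j)$ ($j<n$), $p_n(\ell,n)$ ($\ell<n$), and $p_n(n,n)$ yields the four stated terms. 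The most delicate step will be verifying that the $p_n(n,n)$-coefficient collapses to $\pi(2\pi+1)/((\pi-1)(\pi+1))$; specializing to $n=1$, where only $p_1(0,1)$ and $p_1(1,1)$ appear, gives $\overline{v_\pi} = \frac{\pi}{\pi-1}p_1(0,1) + \frac{\pi(2\pi+1)}{(\pi-1)(\pi+1)}p_1(1,1)$ as a useful consistency check.
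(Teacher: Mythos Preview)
Your proposal is correct and follows essentially the same strategy as the paper: both reduce $\Prob(\pi^k\mid\#E(\F_p))$ to the anti-diagonal sum $\sum_{\ell} p_{k-\ell}(\ell,k-\ell)+\delta(k)$, then push everything back to level $n$ via the three transition probabilities of Theorem~\ref{thprobcond}, and finally sum over $k$ for $\overline{v_\pi}$. The only cosmetic difference is that you begin one step earlier, at the level-$k$ decomposition $\sum_{i+j\ge k} p_k(i,j)$, and telescope to obtain the anti-diagonal identity, whereas the paper asserts that identity directly from the tree in Figure~\ref{schema}; after that the two arguments coincide (your ``forward Markov-chain'' language and the paper's ``path in the graph'' language compute exactly the same products of $c_1,c_2,c_3$).
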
 
\begin{proof}
Let $k$ be a positive integer. Using Figure \ref{schema}, one checks that
\begin{align}\label{exprprob}
\Prob(\pi^{k} \mid \#E(\F_p)) & = \sum_{\ell=0}^{\lfloor \frac{k}{2} \rfloor}
  {p_{k-\ell}(\ell,k-\ell)} + \delta(k).
\end{align}

Let $c_1 = \frac{1}{\pi^4}$, 
  $c_2 =  \frac{(\pi-1)(\pi+1)^2}{\pi^4}$, 
  and $c_3 = \frac{1}{\pi}$. 
With these notations, the situation can be illustrated by Figure \ref{schema}. 
\begin{figure}[ht]
	\centering
	\includegraphics[scale=0.36]{./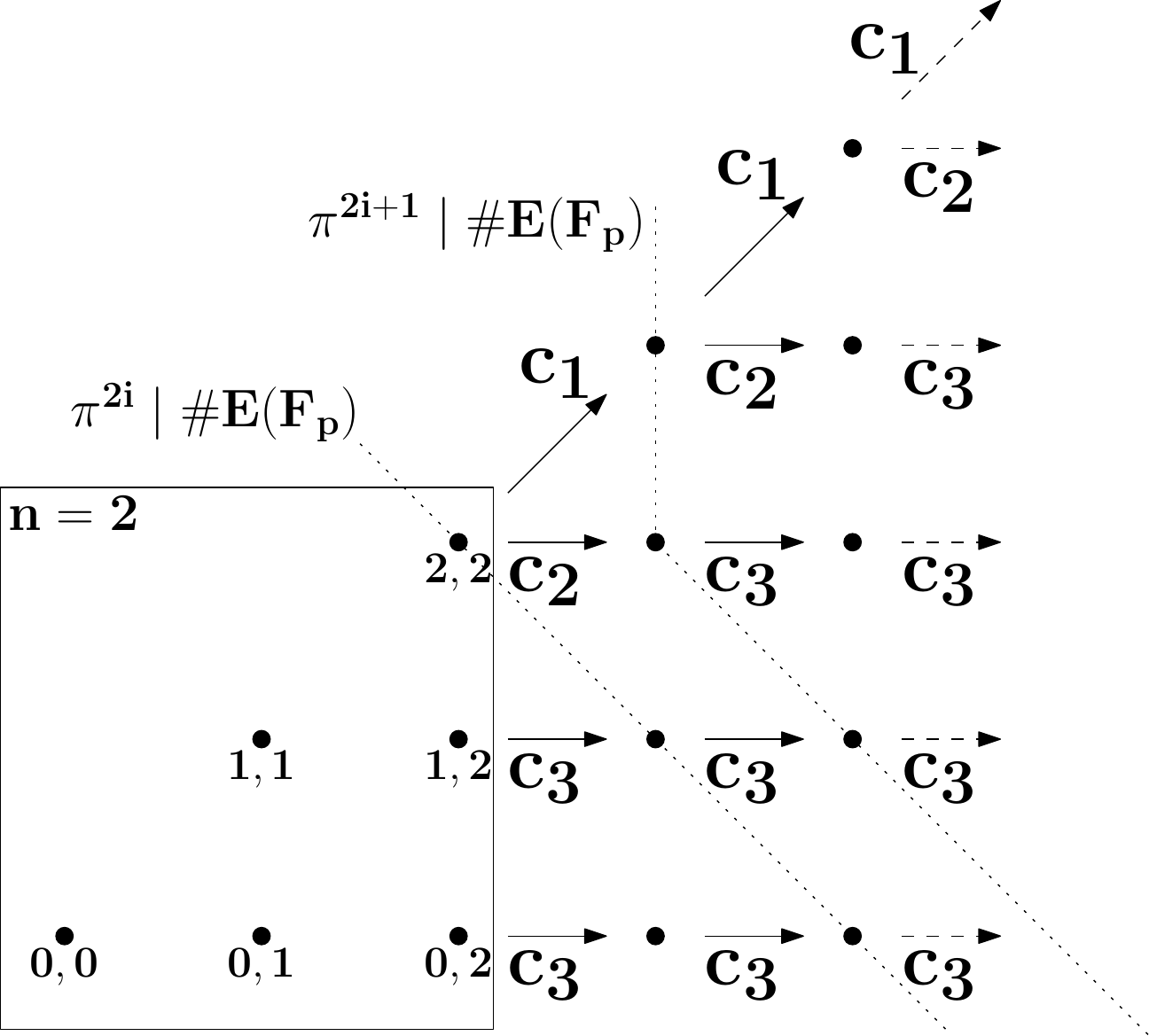}
  \caption{Each node of coordinates $(i,j)$ represents the event $\left ( 
  E_p[\pi^{j}]\simeq \Z/{\pi^i\Z} \times {\Z}/{\pi^j\Z}\right ).$
  The arrows represent the conditional probabilities of Theorem 
  \ref{thprobcond}.}
  \label{schema}
\end{figure}
For $j > n$ and $\ell < n$, the probability $p_j(\ell,j)$ is the product of the 
conditional probabilities of the unique path from $(\ell,j)$ to $(\ell,n)$ in 
the graph of Figure \ref{schema} times the probability $p_n(\ell,n)$. 
For $j > n$ and $\ell \geq n$, the probability $p_j(\ell,j)$ is the product of 
the conditional probabilities of the unique path from $(\ell,j)$ to $(n,n)$ 
in the graph of Figure \ref{schema} times the probability $p_n(n,n)$. 

There are three cases that are to be treated separately: $1 \leq k \leq n$, 
$n < k \leq 2n$ and $k > 2n$. For $1 \leq k \leq n$, the result follows from
Equation (\ref{exprprob}).
Let us explain the case for $k > 2n$, with $k=2i$:
\begin{align*}
 \Prob(\pi^{2i} \mid \#E(\F_p)) & = \sum_{\ell=0}^{i} 
                    {p_{2i-\ell}(\ell,2i-\ell)} + \delta(2i) 
                    = \sum_{\ell=0}^{i} {p_{2i-\ell}(\ell,2i-\ell)} \\
   & =\sum_{\ell=0}^{n-1}{p_{2i-\ell}(\ell,2i-\ell)} + 
      \sum_{\ell=n}^{i-1}{p_{2i-\ell}(\ell,2i-\ell)} + 
         p_i(i,i) \\ 
   & =\sum_{\ell=0}^{n-1}{c_3^{2i-l-n} p_{n}(\ell,n)} + 
      \sum_{\ell=n}^{i-1}{c_3^{2i-2l-1}c_2c_1^{l-n}p_n(n,n)} + 
         c_1^{i-n}p_n(n,n). \\ 
\end{align*}
After computations, one obtains the desired formula. 
The cases $k > 2n$ odd, and $n < k \leq 2n$ are treated similarly.
The formula for $\overline{v_\pi}$ is obtained using  
$\overline{v_\pi} = \sum_{k \ge 1}{\Prob(\pi^k \mid \#E_p)}.$

\end{proof}
\begin{rem} The theorem proves in particular that there exists a bound $B$
such that for primes $\pi>B$, $\Prob(\pi^2\mid \#E(\F_p))< \frac{2}{\pi^2}$,
so the probability that the cardinality is divisible by the square of a
prime greater than $B$ is at most
$\frac{2}{B}$. This confirms the experimental result that an elliptic curve
is close to a cyclic group when reduced modulo an arbitrary prime, regardless on
its rank over $\Q$.  
 \end{rem}
\begin{ex} \label{ex2} Let us compare the theoretical and experimental average 
valuation of $\pi=2$, $\pi=3$ and $\pi=5$ for the curve 
$E_1 : y^2=x^3+5x+7$ and $E_3 : y^2 = x^3 - 10875x + 526250$.
We exclude $E_2$ in this example since it has complex multiplication. 
For $E_1$, we apply Theorem \ref{thprobetval} with $n=1$ and compute the 
necessary probabilities with Corollary \ref{pi} knowing that the Galois groups 
are isomorphic to $\GL_2(\Z/\pi\Z)$. For $E_3$, we apply Theorem 
\ref{thprobetval} with $n=3$ for $\pi=2$ and $n=1$ for $\pi=3$ and $\pi=5$, and
compute the necessary probabilities with Corollary \ref{pi} (when $n=1$) and
Theorem \ref{main} (when $n=3$).The results are shown in Table
\ref{exthprobval}.

\begin{table}[t]
	\begin{center}
		\begin{tabular}{c|c|c|c|c|c|c|c|c|c}
     & \multicolumn{3}{c|}{{\small Average valuation of $2$}} & 
       \multicolumn{3}{c|}{{\small Average valuation of $3$}} & 
       \multicolumn{3}{c}{{\small Average valuation of $5$}} \\ 
    & $n$ & Th. & Exp. & $n$ & Th. & Exp. & $n$ & Th. & Exp. \\
		\hline
    $ E_1$ \rule[-0.15cm]{0cm}{0.5cm} & 
                  $1$ & $\frac{14}{9} \approx 1.556$ & $1.555$ &
                  $1$ & $\frac{87}{128} \approx 0.680$ & $0.679$ &
                  $1$ & $\frac{695}{2304} \approx 0.302$ & $0.301$ \\
		\hline
    $E_3$ \rule[-0.15cm]{0cm}{0.5cm} & 
                  $3$ & $\frac{895}{576} \approx 1.554$ & $1.554$ &
                  $1$ & $\frac{39}{32} \approx 1.219$ & $1.218$ &
                  $1$ & $\frac{155}{192} \approx 0.807$ & $0.807$ \\
		\end{tabular}
	\end{center}

	\caption{Experimental values (Exp.) are obtained with all primes below 
  $2^{25}$. Theoretical values (Th.) come from Theorem~\ref{thprobetval}.}
  \label{exthprobval}
\end{table}

In order to apply Theorem~\ref{thprobetval}, one has to show that
$I(E,\pi,k)=I(E,\pi,n)$ for all $k \geq n$ (or $n\geq n(E,\pi)$ since $E_1$ and
$E_3$ have complex multiplication). For $E_1$, we were only able to prove that
$n(E,\pi)=1$ for $\pi=2$, $\pi=3$ and $\pi=5$ by using the remarks at the end 
of section \ref{computepoldiv}.
For $E_3$, Andrew Sutherland computed for us the Galois groups up to the
$2^5$-, $3^3$- and $5^2$-torsion. It is sufficient to compute the probabilities
and have some intuition for the values of $n$, but we were not able to prove
that they are correct.
In this case, we have to assume that the values of $n$ for which we were
able to compute the Galois group (and so the probabilities) are correct.

\end{ex}

\section{Applications to some Families of Elliptic Curves}\label{Applications}
As shown in the preceding section, changing the torsion properties is equivalent
to modifying the Galois group. One can see the fact of imposing rational
torsion points as a way of modifying the Galois group. In this section we
change the Galois group either by splitting the division polynomials or by
imposing some equations that directly modify the Galois group. With these ideas,
we find new infinite ECM-friendly families and we explain the properties of some
known curves. 

\subsection{Preliminaries on Montgomery and Twisted Edwards Curves.}\label{prelim}
Let $K$ be a field whose characteristic is neither $2$ nor $3$. 
\subsubsection{Edwards curves.}
For $a,d\in K$, with $ad(a-d)\ne 0$, the twisted Edwards curve $ax^2+y^2=1+dx^2y^2$
is denoted by $E_{a,d}$. The ``$a=-1$'' twisted Edwards curves are denoted 
by $E_d$. 
In~\cite{cryptoeprint:2008:016} completed twisted Edwards curves are 
defined by
$$\overline{E}_{a,d} = \{ ((X:Z), (Y:T))\in \PP^1\times \PP^1 \mid aX^2T^2 + Y^2Z^2 = Z^2T^2 + dX^2Y^2\}.$$
The completed points are the affine $(x,y)$ embedded into $\PP^1\times
\PP^1$
by $(x,y)\mapsto ((x~:~1),(y~:~1))$ (see~\cite{cryptoeprint:2008:016} for more
information).
We denote $(1:0)$ by $\infty$.

We give an overview of all the $2$- and $4$-torsion and some $8$-torsion 
points on $\overline{E}_{a,d}$, as specified in~\cite{cryptoeprint:2008:016},
in Figure~\ref{fig:edwardstorsion}.

\subsubsection{Montgomery curves and Suyama family.}
Let $A,B\in K$ be such that $B(A^2-4)\neq0$. 
The Montgomery curve $By^2=x^3+Ax^2+x$ associated to $(A,B)$ is denoted 
by $M_{A,B}$ (see~\cite{Montgomery:1987}) and its completion in $\PP^2$ by
$\overline{M}_{A,B}$.

\begin{rem}\label{Mont=TwEd}
If $a,d,A,B\in K$ are such that $d=\frac{A-2}{B}$ and $a=\frac{A+2}{B}$,
then there is a birational map between $\overline{E}_{a,d}$ and $\overline{M}_{A,B}$ 
given by $((x:z),(y:t))\mapsto ((t+y)x:(t+y)z:(t-y)x)$ 
(see~\cite{Bernstein2008Africacrypt}). Therefore $\overline{M}_{A,B}$ and
$\overline{E}_{a,d}$ have the same group structure over any field where defined and in particular 
the same torsion properties. Any statement in twisted Edwards language can be
easily translated into Montgomery coordinates and vice versa.
\end{rem}

A Montgomery curve for which there exist $x_3,y_3,k,x_\infty,y_\infty \in \Q$
such that
\begin{equation}\label{suyama}
   \left \{
   \begin{aligned}
   P_3(x_3) & = 0, &
   By_3^2 & = x_3^3+Ax_3^2+x_3 && \text{(3-torsion point)}\\
   k & = \frac{y_3}{y_\infty}, &
   k^2 & = \frac{x_3^3+Ax_3^2+x_3}{x_\infty^3+Ax_\infty^2+x_\infty} 
   && \text{(non-torsion point)}\\
   x_\infty & = x_3^3. & & && \text{(Suyama equation)} 
  \end{aligned}
   \right .
\end{equation}
is called a Suyama curve.
As described in~\cite{Suyama,DodZi06}, the solutions of \eqref{suyama} can be
parametrized by a rational value denoted $\sigma$. 
For all $\sigma \in \Q\backslash\{0,\pm 1, \pm 3, \pm 5, 
\pm \frac{5}{3}\}$, the associated Suyama curve has positive rank and a
rational point of order $3$. 

\begin{rem}
In the following, when we say that an elliptic curve $E_{a,d}$ has good 
reduction modulo a prime $p$, we also suppose that we have 
$v_p(a)=v_p(d)=v_p(a-d)=0$
(resp.~$v_p(A-2)=v_p(A+2)=v_p(B)=0$ for a Montgomery 
curve). In this case the reduction map is simply given by reducing the 
coefficients modulo $p$. The results below are also true for primes of good 
reduction which do not satisfy these conditions, by slightly modifying the 
statements and the proofs. Moreover, in ECM, if the conditions are not 
satisfied, we immediately find the factor $p$.
\end{rem}

\begin{figure}[t]
\centering
\begin{tikzpicture}[>=stealth',
    fact/.style={rectangle, draw=white, rounded corners=2mm, fill=white,
        text centered, anchor=center, text=black},
    level distance=0.8cm, growth parent anchor=south,
    kant2/.style={text width=2cm, text centered},
    empty/.style={circle,minimum size=0mm,text=black}
]
\tikzset{BarreStyle/.style =   {opacity=.25,line width=7 mm,line cap=round,color=#1}}

\node (R0) [fact] {$(0,1)$} [sibling distance=4cm] [<-]
child{ [sibling distance=2.4cm]
  node (R1) [fact] {$(0,-1)$}
  child{ 
    node (R2) [fact] {$(\pm \sqrt{a^{-1}},0)$}
    child{ 
      node (S0) [fact] {$(x_8,\pm \sqrt{a}x_8)$}
    }
  }
  child {
    node (R3) [fact] {$(\pm\sqrt{d^{-1}},\infty)$}
    child{ 
      node (S1) [fact] {$(\hat{x}_8,\pm \sqrt{d^{-1}} \hat{x}_8^{-1})$}
    }
  }  
}
child{
  node (R4) [fact] {$(\infty,\sqrt{\frac{a}{d}})$} [<-]
  child{
    node (R5) [fact] {$(\pm \sqrt{-a^{-1}}\sqrt[4]{\frac{a}{d}}, \pm\sqrt[4]{\frac{a}{d}})$}
  }
}
child{
  node (R6) [fact] {$(\infty,-\sqrt{\frac{a}{d}})$} [<-]
  child{
    node (R7) [fact] {$(\pm \sqrt{a^{-1}}\sqrt[4]{\frac{a}{d}}, \pm i\sqrt[4]{\frac{a}{d}})$}
  }
};
\draw [BarreStyle=gray] (R0.west) to (R0.east) ;
\draw [empty] (R0.north) node[empty,yshift=0.2cm,xshift=.8cm] {1-torsion};

\draw [BarreStyle=gray] (R1.west) to (R6.east) ;
\draw [empty] (R6.north) node[empty,yshift=0.2cm,xshift=.8cm] {2-torsion};

\draw [BarreStyle=gray] (R2.west) to (R7.east) ;
\draw [empty] (R7.north) node[empty,yshift=0.2cm,xshift=.8cm] {4-torsion};

\draw [BarreStyle=gray] (S0.west) to (S1.east) ;
\draw [empty] (S1.north) node[empty,yshift=0.2cm,xshift=.8cm] {8-torsion};

\end{tikzpicture}
\caption{An overview of all 1-, 2- and 4-torsion and some 8-torsion points 
on twisted Edwards curves.
The $x_8$ and $\hat{x}_8$ in the 8-torsion points are such that 
$adx_8^4-2ax_8^2+1=0$ and $ad\hat{x}_8^4-2d\hat{x}_8^2+1=0$.}
\label{fig:edwardstorsion}
\end{figure}

\subsection{Generic Galois Group of a Family of Curves.}
In the following, when we talk about the {\em Galois group of the $m$-torsion of
a family of curves}, we talk about a group isomorphic to the Galois group of the
$m$-torsion for all curves of the family except for a sparse set of curves 
(which can have a smaller Galois group).

For example, let us consider the Galois group of the $2$-torsion for the following
family $\{\mathcal{E}_r: y^2=x^3+rx^2+x \mid r \in \Q\backslash\{\pm2\} \}$. The Galois
group of the $2$-torsion of the
curve $\mathcal{E}: y^2=x^3+Ax^2+x$ over $\Q(A)$ is $\Z/2\Z$. Hence, for most
values of $r$ the Galois group is $\Z/2\Z$ and for a sparse set of values the
Galois group is the trivial group. So, we say that the Galois group of the
$2$-torsion of this family is $\Z/2\Z$.

To our best knowledge, there is no implementation of an algorithm computing 
Galois groups of polynomials with coefficients in a function field.
Instead we can compute the Galois group for every curve of the family, so we can
guess the Galois group of the family from a finite number of instantiations.
In practice, we took a dozen random curves in the family and if all these Galois groups of the $m$-torsion were the same, we guessed that it was the Galois 
group of the $m$-torsion of the family of curves.

\subsection{Study of the $2^k$-Torsion of Montgomery/Twisted Edwards Curves.}
\label{2ktorsion} The rational torsion of a Montgomery/twisted Edwards curve is
$\Z/2\Z$ but it is known that $4$ divides the order of the curve when reduced
modulo any prime $p$ \cite{Suyama}. The following theorem gives more detail on
the $2^k$-torsion. 

\begin{theo}\label{th2ktorsion}
Let $E=E_{a,d}$ be a twisted Edwards curve
(resp.~a Montgomery curve $M_{A,B}$)
over $\Q$. Let $p$ be a prime such that $E$ has good reduction at $p$.
\begin{enumerate}
\item If $p \equiv 3 \pmod{4}$ and $\frac{a}{d}$ (resp.~$A^2-4$) is a 
quadratic residue modulo $p$, then $E(\F_p)[4] \simeq \Z/2\Z \times \Z/4\Z$;
\item If $p \equiv 1 \pmod{4}$, $a$ (resp.~$\frac{A+2}{B}$) is a quadratic 
residue modulo $p$ (in particular $a=\pm1$) and $\frac{a}{d}$ 
(resp.~$A^2-4$) is a quadratic residue modulo $p$, then 
$\Z/2\Z \times \Z/4\Z \subset E(\F_p)[4]$;
\item If $p \equiv 1 \pmod{4}$, $\frac{a}{d}$ (resp.~$A^2-4$) is a quadratic 
non-residue modulo $p$ and $a-d$ (resp.~$B$) is a quadratic residue modulo $p$,
then $E(\F_p)[8] \simeq \Z/8\Z$.
\end{enumerate}
\end{theo}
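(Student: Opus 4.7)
The plan is to read rationality of low-order torsion points directly off the explicit parametrisations tabulated in Figure \ref{fig:edwardstorsion}, working throughout on the twisted Edwards model $E_{a,d}$ and translating back to Montgomery via Remark \ref{Mont=TwEd}. The group structure is then pinned down by combining these rationality statements with the Weil pairing, which forces $\mu_m \subset \F_p$ whenever $E[m] \subset E(\F_p)$.

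For part (1), the three non-trivial 2-torsion points $(0,-1)$ and $(\infty,\pm\sqrt{a/d})$ are already $\F_p$-rational since $a/d$ is a QR. To produce a rational 4-torsion point I split on the quadratic character of $a$. If $a$ is a QR then $\sqrt{a^{-1}}\in\F_p$, so $(\pm\sqrt{a^{-1}},0)$ works. If $a$ is a QNR, then $p\equiv 3\pmod 4$ makes $-1$ a QNR as well, so $-a^{-1}$ is a QR; moreover $(p-1)/2$ is odd, so every QR in $\F_p^*$ is already a fourth power, giving $\sqrt[4]{a/d}\in\F_p$, and the point $(\sqrt{-a^{-1}}\sqrt[4]{a/d},\sqrt[4]{a/d})$ is rational. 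In either case $\Z/2\Z\times\Z/4\Z\hookrightarrow E(\F_p)[4]$, and the containment is strict: $E[4]\subset E(\F_p)$ would force $i\in\F_p$, contradicting $p\equiv 3\pmod 4$. Part (2) is immediate from the same table: $\sqrt{a/d}\in\F_p$ gives the full 2-torsion and $\sqrt{a^{-1}}\in\F_p$ gives the 4-torsion points $(\pm\sqrt{a^{-1}},0)$; only the inclusion is claimed, so no further work is needed.

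Part (3) is the main case. Because $a/d$ is a QNR the points $(\infty,\pm\sqrt{a/d})$ are not in $E(\F_p)$, so $E(\F_p)[2]\simeq\Z/2\Z$; any rational 8-torsion point must therefore generate a cyclic subgroup, so it suffices to exhibit one such point to force $E(\F_p)[8]\simeq\Z/8\Z$. Since $a/d$ is a QNR exactly one of $a,d$ is a QR, and I treat the two sub-cases with the two 8-torsion quartics from Figure \ref{fig:edwardstorsion}. If $a$ is the QR I use $adx^4-2ax^2+1=0$: its $x^2$-discriminant is $4a(a-d)$, a QR by the hypotheses on $a-d$ and on $a$, so both candidate values of $x_8^2$ lie in $\F_p$; the product of these two values is $1/(ad)$, a QNR since $d$ is, so exactly one of them is itself a QR. That choice yields $x_8\in\F_p$, and with $\sqrt{a}\in\F_p$ this produces a rational 8-torsion point $(x_8,\sqrt{a}\,x_8)$. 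If instead $d$ is the QR I use $ad\hat{x}^4-2d\hat{x}^2+1=0$; the $\hat{x}^2$-discriminant is $-4d(a-d)$, a QR because $p\equiv 1\pmod 4$ makes $-1$ a QR, and the same product-of-roots argument (now $1/(ad)$ is a QNR because $a$ is) furnishes a rational 8-torsion point $(\hat{x}_8,\sqrt{d^{-1}}\hat{x}_8^{-1})$.

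The main obstacle is the last step of part (3): once both candidate values of $x_8^2$ (or $\hat{x}_8^2$) are known to lie in $\F_p$, one must decide whether either is itself a quadratic residue. The product-of-roots formula $1/(ad)$ together with the hypothesis that $a/d$ is a QNR turns this into a short residue computation rather than a delicate separate argument.
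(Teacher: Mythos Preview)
Your argument is correct and follows essentially the same route as the paper: reduce to the Edwards model via Remark~\ref{Mont=TwEd} and read off the rationality of the relevant $2$-, $4$-, and $8$-torsion points from the explicit coordinates in Figure~\ref{fig:edwardstorsion}, with part~(3) handled by the discriminant $4a(a-d)$ (resp.\ $-4d(a-d)$) of the biquadratic together with the product-of-roots $1/(ad)$ to guarantee one square value of $x_8^2$. The only cosmetic difference is that in part~(1) you invoke the Weil pairing to rule out full $4$-torsion when $p\equiv 3\pmod 4$, whereas the paper simply notes that the explicit $4$-torsion coordinates involve $i$; both are one-line observations.
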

\begin{proof}
Using Remark \ref{Mont=TwEd}, it is enough to prove the results in the 
Edwards language, which follow by some calculations using Figure 
\ref{fig:edwardstorsion}.
\end{proof}

Theorem~\ref{th2ktorsion} suggests that by imposing equations on the parameters
$a$ and $d$ we can improve the torsion properties. The case where $\frac{a}{d}$
is a square has been studied in \cite{cryptoeprint:2008:016} and \cite{starfish}
for the family of Edwards curves having $\Z/2\Z\times\Z/8\Z$ (when $a=1$)
respectively $\Z/2\Z\times\Z/4\Z$ (when $a=-1$) rational torsion.  Here we focus
on two other equations: 
\begin{align} 
\label{eqs11} & \exists c \in \Q,\, a=-c^2 & 
       \qquad \text{ (}A+2=-Bc^2 \text{ for Montgomery curves),} \\
\label{eqs94} & \exists c \in \Q,\, a-d=c^2 &
      \qquad  \text{ (}B=c^2 \text{ for Montgomery curves).}
\end{align}

The cardinality of the Galois group of the $4$-torsion for generic
Montgomery curves is $16$ and this is reduced to $8$ for the family of curves
satisfying \eqref{eqs11}.
Using Theorem \ref{main}, we can compute the changes of
probabilities due to this new Galois group. For all curves satisfying
\eqref{eqs11} and all primes $p \equiv 1 \pmod{4}$, the probability of having
$\Z/2\Z \times \Z/2\Z$ as the $4$-torsion group becomes $0$ (instead of
$\frac{1}{4}$); the probabilities of having $\Z/2\Z \times \Z/4\Z$  and $\Z/4\Z
\times \Z/4\Z$ as the $4$-torsion group become $\frac{1}{4}$ (instead of
$\frac{1}{8}$). 

The Galois group of the $8$-torsion of the family of curves 
satisfying~\eqref{eqs94} has cardinality $128$ instead of $256$ for
generic Montgomery curves. Using
Theorem~\ref{main}, one can see that the probabilities of having an $8$-torsion
point are improved.

Using Theorem~\ref{thprobetval}, one can show that both families of curves,
the family satisfying~\eqref{eqs11} and the one satisfying~\eqref{eqs94},
increase the probability of having
the cardinality divisible by $8$ from $62.5\%$ to $75\%$ and the average 
valuation of $2$ from $\frac{10}{3}$ to $\frac{11}{3}$.

\subsection{Better Twisted Edwards Curves with Torsion $\ZTF$ using Division 
Polynomials. } \label{TwEd}
In this section we search for curves such that some of the factors of the
division polynomials split and by doing so we try to change the Galois groups.
As an example we consider the family of $a=-1$ twisted Edwards curves $E_d$
with $\ZTF$-torsion, these curves are exactly the ones with $d=-e^4$ 
(see~\cite{starfish}). The technique might be used in any context. 

\subsubsection{Looking for subfamilies.}

For a generic $d$, $P_8^{\new}$ splits into three irreducible 
factors: two of degree $4$ and one of degree $16$. If one takes $d=-e^4$, 
the polynomial of degree $16$ splits into three factors: two of 
degree $4$, called $P_{8,0}$ and $P_{8,1}$, and one of degree $8$, called 
$P_{8,2}$. By trying to force one of these three polynomials to split, 
we found four families, as shown in Table \ref{Edsubfamilies}.

\begin{table}[ht]
  \begin{center}
    \begin{tabular}{c|c|c|c|c|c}
    $d=-e^4$ & ``generic'' $e$ & $e=g^2$ & $e=\frac{2g^2+2g+1}{2g+1}$ & 
    $e=\frac{g^2}{2}$ & $e=\frac{g-\frac{1}{g}}{2}$ \\
    \hline

    degree of factors of $P_{8,0}$ & $4$ & $4$ & $4 $& $2,2$ & $2,2$ \\
    degree of factors of $P_{8,1}$ & $4$ & $4$ & $4$ & $4$ & $2,2$ \\
    degree of factors of $P_{8,2}$ & $8$ & $4,4$ & $4,4$ & $8$ & $8$  \\
    \hline
  average valuation of $2$ & $\frac{14}{3}$ & $\frac{29}{6}$ & 
                             $\frac{29}{6}$ & $\frac{29}{6}$ & 
                             $\frac{16}{3}$ \\
   for $p = 3 \bmod{4}$   & $4$ & $4$ & $4$ & $4$ & $5$\\ 
   for $p = 1 \bmod{4}$   & $\frac{16}{3}$ & $\frac{17}{3}$ & 
                             $\frac{17}{3}$ & $\frac{17}{3}$ & 
                             $\frac{17}{3}$ \\
    \end{tabular}
  \end{center}

  \caption{Subfamilies of twisted Edwards curves with torsion group isomorphic 
  to $\ZTF$ and the degrees of the irreducible factors of $P_{8,0}$, $P_{8,1}$
  and $P_{8,2}$.} 
  \label{Edsubfamilies}
\end{table} 
%

In all these families the generic average valuation of $2$ is increased by
$\frac{1}{6}$ ($\frac{29}{6}$ instead of $\frac{14}{3}$), except the family 
$e=\frac{g-\frac{1}{g}}{2}$ for which it is increased by $\frac{2}{3}$,
bringing it to the same valuation as for the family of twisted Edwards curves with
$a=1$ and torsion isomorphic to $\Z/2\Z \times \Z/8\Z$.
Note that these four families cover all the curves presented in the first 
three columns of \cite[Table 3.1]{starfish}, except the two curves with 
$e=\frac{26}{7}$ and $e=\frac{19}{8}$, which have a generic Galois group for the
$8$-torsion. 

\subsubsection{The family $e=\frac{g-\frac{1}{g}}{2}$. }
In this section, we study in more detail the family 
$e=\frac{g-\frac{1}{g}}{2}$. Using Theorem~\ref{main} one can prove that the 
group order modulo all primes is divisible by $16$. 
However, we give an alternative proof which is also of independent interest.
We need the following theorem which computes the
8-torsion points that double to the $4$-torsion points $(\pm \sqrt[4]{-d^{-1}},
\pm \sqrt[4]{-d^{-1}})$.

\begin{theo}
Let $E_d$ be a twisted Edwards curve over $\Q$ with
$d=-e^4$, $e=\frac{g-\frac{1}{g}}{2}$ and \mbox{$g\in\Q\setminus\{-1,0,1\}$}.
Let $p>3$ be a prime of good reduction.
If $t\in\{1,-1\}$ such that $tg(g-1)(g+1)$ is
a quadratic residue modulo $p$ then 
the points $(x,y) \in E_d(\F_p)$, with $w\in\{1,-1\}$,
and
\begin{equation}\label{eq:8torsion}
x=\pm g^wy, \qquad y=\pm \sqrt{\frac{4tg^{2-w}}{(g-tw)^3(g+tw)}}
\end{equation}
have order eight and double to $(\pm e^{-1}, t e^{-1})$.
\label{thm:edw1}\end{theo}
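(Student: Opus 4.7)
The plan is to verify Theorem~\ref{thm:edw1} by a direct computation, exploiting the explicit form of the 4-torsion points given in Figure~\ref{fig:edwardstorsion}. With $a=-1$ and $d=-e^4$ one has $a/d = e^{-4}$ and $\sqrt{-a^{-1}}=1$, so the four affine points $(\pm e^{-1},\pm e^{-1})$ are precisely the 4-torsion points of $E_d$ listed in the figure that double to the 2-torsion points at infinity. Thus to show that a point $P\in E_d(\F_p)$ is of order $8$, it suffices to show $P$ lies on the curve and $2P=(\pm e^{-1},te^{-1})$ for some $t\in\{\pm 1\}$.

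First, I would substitute the candidate $(x,y) = (\pm g^w y,\,y)$ with $y^2=\tfrac{4tg^{2-w}}{(g-tw)^3(g+tw)}$ into the affine Edwards equation $-x^2+y^2 = 1-e^4 x^2 y^2$. Using $(tw)^2=1$ the denominator simplifies as $(g-tw)^3(g+tw) = (g-tw)^2(g^2-1)$, and $e^2=\tfrac{(g^2-1)^2}{4g^2}$ by the definition of $e$. After clearing denominators the curve equation becomes a polynomial identity in $g,t,w$ which reduces to a true relation; this is the first routine verification.

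Next, I would apply the doubling map on twisted Edwards curves with $a=-1$, namely
\[
2P=\left(\frac{2xy}{-x^2+y^2},\;\frac{y^2+x^2}{2+x^2-y^2}\right),
\]
to the candidate $P$. Substituting $x=\pm g^w y$ factors both coordinates as rational functions in $y^2$ alone, and the explicit value of $y^2$ collapses them to $\pm e^{-1}$ and $te^{-1}$ respectively. Tracking the sign of $x/y = \pm g^w$ determines the sign of the first coordinate of $2P$, while the choice $t=\pm 1$ selects between the two possible signs of the second coordinate, matching the parametrization in the statement. This gives $2P\in\{(\pm e^{-1},te^{-1})\}$, so $P$ has order $8$ whenever it is $\F_p$-rational.

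Finally, rationality of $P$ reduces to the existence of a square root of $\tfrac{4tg^{2-w}}{(g-tw)^3(g+tw)}$ in $\F_p$. After using $(g-tw)^3(g+tw) = (g-tw)^2(g-1)(g+1)$ and noting that $g^{2-w}$ differs from $g$ by a square (it equals $g$ or $g^3=g\cdot g^2$), the expression equals $tg(g-1)(g+1)$ modulo squares. Hence $P\in E_d(\F_p)$ precisely when $tg(g-1)(g+1)$ is a quadratic residue modulo $p$, which is the hypothesis.

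The only real obstacle is bookkeeping in the doubling step: matching the choices of sign in $x=\pm g^w y$ and $y=\pm(\,\cdot\,)$ with the resulting signs in $2P$, and making sure that all four pairs $(\pm e^{-1},te^{-1})$ are covered by letting $w$ and the two signs vary. Everything else is routine polynomial manipulation using only $e=(g-1/g)/2$ and $(tw)^2=1$.
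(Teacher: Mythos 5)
Your proposal is correct and follows essentially the same route as the paper's proof: both rest on the explicit doubling formula for $a=-1$ twisted Edwards curves, the identity $e=\frac{g^2-1}{2g}$ (so that $(g-tw)(g+tw)=g^2-1$), and the reduction of the rationality of $y$ to the quadratic-residue condition on $tg(g-1)(g+1)$. The only difference is direction --- the paper solves the doubling equations $2P=(se^{-1},te^{-1})$ to derive the coordinates, whereas you substitute the given coordinates and verify; your direction is, if anything, the more immediate way to establish the statement as phrased.
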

\begin{proof}
Note that all points $(x,y)$ of order eight satisfy \mbox{$\infty\ne x\ne 0\ne y\ne \infty$}. 
Following \cite[Theorem 2.10]{cryptoeprint:2008:016} a point $(x,y)$ doubles to 
$((2xy:1+dx^2y^2),(x^2+y^2:1-dx^2y^2))=((2xy:-x^2+y^2),(x^2+y^2:2-(-x^2+y^2)))$.
Let $s,t\in\{1,-1\}$ such that $(x,y)$ doubles to $(s e^{-1}, t e^{-1})$, hence
$$\frac{2xy}{-x^2+y^2}=\frac{s}{e} \quad\textrm{ and }\quad
\frac{x^2+y^2}{2-(-x^2+y^2)}=\frac{t}{e}.$$ 
From the terms in the first equation we obtain 
$\left(\frac{x}{y}\right)^2 + \frac{2exs}{y} + e^2 = 1+e^2$. 
Write $e=\frac{g-\frac{1}{g}}{2}$ such that 
$\left(\frac{x}{y} + se\right)^2 = \left(\frac{g+\frac{1}{g}}{2}\right)^2$.
Hence 
$\frac{x}{y}\in\left\{\pm g, \pm \frac{1}{g}\right\}$
depending on the sign $s$ and the sign after taking the square root.
This gives $x^2=G^2y^2$ with $G^2\in\{g^2,g^{-2}\}$.

From the second equation we obtain
$(e-t)x^2+(e+t)y^2 = 2t$ and
substituting $x^2$ results in 
$\left((e-t)G^2+(e+t)\right)y^2=2t$.
This can be solved for $y$ when $2t\left((e-t)G^2+(e+t)\right)$
is a quadratic residue modulo $p$.
This is equivalent to checking if any of
\begin{eqnarray}
2t\left((e-1)g^2+(e+1)\right) & = & \frac{t(g-1)^3(g+1)}{g}, \label{eq:d1}\\
2t\left((e-1)+(e+1)g^2\right) & = & \frac{t(g-1)(g+1)^3}{g}\label{eq:d2}
\end{eqnarray}
is a quadratic residue modulo $p$. 
By assumption $tg(g-1)(g+1)$ is a quadratic residue
modulo $p$. Hence, both expression~\eqref{eq:d1} and~\eqref{eq:d2} are quadratic residues
modulo $p$. Solving for $y$ and keeping track of all the signs
results in the formulae in \eqref{eq:8torsion}.
\end{proof}


A direct consequence of this theorem is as follows.
\begin{cor}\label{col:div}
Let $E=E_d$ be a twisted Edwards curve over $\Q$ with
$d=-\left(\frac{g-\frac{1}{g}}{2}\right)^4$,
\mbox{$g\in\Q\setminus\{-1,0,1\}$}
and
$p>3$ a prime of good reduction.
Then $E(\Q)$ has torsion group isomorphic to $\ZTF$ and
the group order of $E(\F_p)$ is divisible by $16$.  
\end{cor}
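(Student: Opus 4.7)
The plan is to combine the known parametrization of $a=-1$ twisted Edwards curves having rational torsion $\Z/2\Z\times\Z/4\Z$ with the explicit $8$-torsion description of Theorem~\ref{thm:edw1}. Since the substitution $e=(g-g^{-1})/2$ gives $d=-e^4$, the curve $E=E_d$ lies in the family from~\cite{starfish} whose rational torsion is $\Z/2\Z\times\Z/4\Z$, yielding the first assertion. Reduction modulo any prime $p>2$ of good reduction is injective on prime-to-$p$ torsion, so this group embeds into $E(\F_p)$; in particular, the full $2$-torsion $E(\F_p)[2]\simeq\Z/2\Z\times\Z/2\Z$ is rational (the points $(0,\pm 1)$ and $(\infty,\pm e^{-2})$ all have $\F_p$-coordinates because $e\in\F_p^*$).

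For the divisibility $16\mid \#E(\F_p)$, I would split on the residue of $p$ modulo $4$, observing that $a/d=1/e^4$ is a nonzero square in $\F_p^*$ in either case (good reduction forces $e\not\equiv 0$). If $p\equiv 1\pmod 4$, then $\sqrt{-1}\in\F_p$, so a direct inspection of Figure~\ref{fig:edwardstorsion} with $a=-1$, $d=-e^4$, $\sqrt[4]{a/d}=e^{-1}$ shows that each listed $4$-torsion coordinate is $\F_p$-rational; hence $E(\F_p)[4]\simeq\Z/4\Z\times\Z/4\Z$ already has order $16$.

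If instead $p\equiv 3\pmod 4$, then $-1$ is a non-square, so for any $x\in\F_p^*$ exactly one of $x,-x$ is a square. Good reduction forces $g\not\equiv 0,\pm 1\pmod p$ (otherwise $v_p(d)$ or $v_p(a-d)$ is nonzero), so $g(g-1)(g+1)\in\F_p^*$, and we may pick $t\in\{-1,1\}$ with $tg(g-1)(g+1)$ a square. Theorem~\ref{thm:edw1} then produces an $\F_p$-rational point $P$ of order $8$. Since $\langle P\rangle$ meets the full rational $2$-torsion $E(\F_p)[2]$ in exactly $\{O,4P\}$, the subgroup $\langle P\rangle+E(\F_p)[2]$ has order $8\cdot 4/2=16$ (and is isomorphic to $\Z/2\Z\times\Z/8\Z$), finishing this case.

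The main obstacle is the case $p\equiv 3\pmod 4$: one must verify that the quadratic-residue hypothesis of Theorem~\ref{thm:edw1} can always be met, which comes down to the elementary fact that over $\F_p$ with $-1$ a non-square, exactly one of any nonzero element and its negative is a square. Once this is in place, the rest is bookkeeping on the torsion chart of Figure~\ref{fig:edwardstorsion} and a count of subgroup orders.
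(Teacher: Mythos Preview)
Your proof is correct and follows essentially the same approach as the paper's: both split on $p\bmod 4$, use the extra $4$-torsion coming from $\sqrt{-1}\in\F_p$ when $p\equiv 1\pmod 4$, and invoke Theorem~\ref{thm:edw1} (after noting that exactly one of $\pm g(g-1)(g+1)$ is a square) when $p\equiv 3\pmod 4$. Your write-up is in fact a bit more explicit than the paper's---you verify the full $4$-torsion in the first case and justify $g(g-1)(g+1)\in\F_p^*$ via the good-reduction hypothesis---but the underlying argument is the same.
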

\begin{proof}
We consider two cases.

\noindent ($1$) If $p\equiv 1 \pmod 4$ then $-1$ is a quadratic residue 
modulo $p$. Hence, the 4-torsion points $(\pm i, 0)$ exist
(see Figure~\ref{fig:edwardstorsion}) and $16\mid\#E(\F_p)$.

\noindent ($2$) If $p\equiv 3 \pmod 4$ then $-1$ is a quadratic non-residue modulo $p$. 
Then exactly one of $\{g(g-1)(g+1), -g(g-1)(g+1)\}$ is a quadratic residue 
modulo $p$. Using Thm.~\ref{thm:edw1} it follows that the curve $E(\F_p)$ 
has eight 8-torsion points and hence $16\mid\#E(\F_p)$.
\end{proof}
Corollary~\ref{col:div} explains the good behavior of the curve with $d=-(\frac{77}{36})^4$
and torsion group isomorphic to $\ZTF$ found in~\cite{starfish}. This parameter can
be expressed as $d=-(\frac{77}{36})^4=-\left(\frac{g-\frac{1}{g}}{2}\right)^4$ for 
$g=\frac{9}{2}$ and, therefore, the group order is divisible by an additional factor two.

\begin{cor}
Let $g\in\Q\setminus\{-1,0,1\}$, $d=-\left(\frac{g-\frac{1}{g}}{2}\right)^4$ and
$p\equiv 1 \pmod 4$ be a prime of good reduction.
If $g(g-1)(g+1)$ is a quadratic residue modulo $p$
then the group order of $E_d(\F_p)$ is divisible by $32$.
\end{cor}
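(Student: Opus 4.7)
The plan is to combine two ingredients: (a) the fact that for $p \equiv 1 \pmod 4$, the full $4$-torsion of $E_d$ is rational, and (b) Theorem~\ref{thm:edw1}, which under the QR hypothesis produces a point of order $8$. Together with the structure theorem for finite abelian groups, this forces $32 \mid \#E_d(\F_p)$.

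First, I would verify that $E_d(\F_p)[4] \simeq \Z/4\Z \times \Z/4\Z$ for every $p \equiv 1 \pmod 4$ of good reduction. With $a=-1$ and $d=-e^4$, Figure~\ref{fig:edwardstorsion} gives the following $4$-torsion points: $(0,-1)$, $(\infty, \pm e^{-2})$, $(\pm i, 0)$, $(\pm i e^{-2}, \infty)$, $(\pm e^{-1}, \pm e^{-1})$ and $(\pm i e^{-1}, \pm i e^{-1})$. All coordinates involve only $e$ and $i=\sqrt{-1}$, the latter existing in $\F_p$ because $p\equiv 1\pmod 4$. Counting shows that we obtain exactly $16$ rational $4$-torsion points, so the full $4$-torsion is $\F_p$-rational.

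Second, I would invoke Theorem~\ref{thm:edw1} with $t=1$: by hypothesis $g(g-1)(g+1)$ is a quadratic residue modulo $p$, so the theorem yields (rational) points of order $8$ in $E_d(\F_p)$ doubling to $(\pm e^{-1}, e^{-1})$. (One may even take $t=-1$ as well, since $-1$ is a QR modulo $p$, but a single $8$-torsion point suffices.)

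Finally, I would conclude by the structure theorem: write the $2$-primary part of $E_d(\F_p)$ as $\Z/2^{a_1}\Z \times \Z/2^{a_2}\Z$ with $a_1 \leq a_2$. The full $4$-torsion being rational forces $a_1, a_2 \geq 2$, while the existence of a point of order $8$ forces $a_2 \geq 3$. Hence $2^{a_1+a_2} \geq 2^5 = 32$, and therefore $32 \mid \#E_d(\F_p)$. The only step that requires any care is the bookkeeping in the first paragraph — making sure that every branch of the $4$-torsion tree in Figure~\ref{fig:edwardstorsion} is $\F_p$-rational under the assumption $p\equiv 1\pmod 4$ — but this reduces to checking that $-1$ is a square in $\F_p$, which is immediate.
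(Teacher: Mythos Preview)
Your proof is correct and follows essentially the same approach as the paper's: the paper's proof is the terse three-line version ``All 16 4-torsion points are in $E_d(\F_p)$ (see Figure~\ref{fig:edwardstorsion}); by Theorem~\ref{thm:edw1} we have at least one 8-torsion point; hence $32\mid\#E_d(\F_p)$'', and you have simply spelled out the bookkeeping behind each of these steps. Your explicit listing of the $4$-torsion points and the structure-theorem argument at the end are exactly the details that the paper leaves implicit.
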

\begin{proof}
All 16 4-torsion points are in $E_d(\F_p)$ (see Figure~\ref{fig:edwardstorsion}).
By Thm.~\ref{thm:edw1} we have at least one 8-torsion point. 
Hence, $32\mid\#E_d(\F_p)$.
\end{proof}
We generated different values $g\in\Q$ by setting
$g=\frac{i}{j}$ with $1\le i < j \le 200$ such that $\gcd(i,j)=1$.
This resulted in $12\,231$ possible values for $g$ and Sage~\cite{sage} found
$614$ non-torsion points. As expected, we observed that they behave similarly 
as the good curve found in~\cite{starfish}.

\subsubsection{Parametrization.}
In~\cite{starfish} a ``generating curve'' is specified which parametrizes $d$ and
the coordinates of the non-torsion points. Arithmetic on this curve can be used 
to generate an infinite family of twisted Edwards curves with torsion group 
isomorphic to $\ZTF$ and a non-torsion point.
Using ideas from~\cite{BrierC10}
we found a parametrization which does not involve a
generating curve and hence no curve arithmetic. 
\begin{theo}\label{parametrization}
Let $t\in\Q\setminus\{0,\pm 1\}$ and
$d=-e^4$, $e=\frac{3(t^2-1)}{8t}$, $x_{\infty}=(4e^3+3e)^{-1}$ and $y_{\infty}=\frac{9t^4-2t^2+9}{9t^4-9}$.
Then the twisted Edwards curve $-x^2+y^2=1+dx^2y^2$ has torsion group $\ZTF$ and
$(x_{\infty},y_{\infty})$ is a non-torsion point.
\end{theo}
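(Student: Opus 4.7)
The plan is to split the assertion into three sub-claims: (a) the equation really defines a twisted Edwards curve in the family $d = -e^4$ for all admissible $t$; (b) the rational torsion subgroup is $\ZTF$; and (c) the prescribed point lies on the curve but outside the rational torsion subgroup. For (a) and (b), note that $e = \frac{3(t^2-1)}{8t}$ is a well-defined nonzero rational number for $t \in \Q \setminus \{0,\pm 1\}$ (implicitly excluding the four rational values $t \in \{\pm 3, \pm \tfrac{1}{3}\}$ for which $e = \pm 1$ and hence $a-d = 0$), so $d = -e^4$ is a legitimate $a=-1$ Edwards parameter. Since $d = -e^4$ with $e \in \Q$, the discussion recalled in Section~\ref{TwEd} (following~\cite{starfish}) identifies the rational torsion subgroup with $\ZTF$, consisting of the eight points $(0,\pm 1)$, $(\infty,\pm e^{-2})$, and $(\pm e^{-1}, \pm e^{-1})$, obtained by specializing Figure~\ref{fig:edwardstorsion} at $a=-1$, $d=-e^4$.

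For the on-curve verification in (c), I would substitute the given expressions and reduce the curve equation
\[
-x_\infty^2 + y_\infty^2 - 1 + e^4 x_\infty^2 y_\infty^2 = 0
\]
to a polynomial identity in $\Q(t)$. A short calculation gives
\[
4e^3 + 3e \;=\; \frac{9(t^2-1)(3t^2+1)(t^2+3)}{128\, t^3},
\]
so both $x_\infty$ and $y_\infty$ become explicit rational functions of $t$; clearing denominators then reduces the claim to the vanishing of an explicit polynomial in $\Z[t]$, which I would check by direct expansion (a one-line verification in a computer algebra system).

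For the non-torsion part, since $4e^3 + 3e = e(4e^2+3) \neq 0$ over $\Q$, the coordinate $x_\infty$ is a finite nonzero rational, so immediately $(x_\infty,y_\infty) \neq (0,\pm 1)$ and $(x_\infty,y_\infty) \neq (\infty, \pm e^{-2})$. To exclude the four remaining torsion points, the equality $x_\infty = \pm e^{-1}$ would force $4e^2 + 3 = \pm 1$, i.e., $e^2 \in \{-1, -\tfrac{1}{2}\}$, which has no rational (even no real) solution. Hence $(x_\infty,y_\infty)$ does not lie in the rational torsion subgroup, and since it is a rational point on a curve whose torsion subgroup has order $8$, it must have infinite order. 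The main obstacle will be the rational-function bookkeeping in step (c); the torsion classification and the non-torsion check are then immediate consequences of the structural results of Section~\ref{TwEd} and the particularly simple form of $x_\infty$ in terms of $e$.
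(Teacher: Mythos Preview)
Your argument is correct and follows essentially the same route as the paper: invoke the $d=-e^4$ classification from~\cite{starfish} for the torsion group, then exclude the torsion $x$-coordinates $\{0,\infty,\pm e^{-1}\}$ to conclude that $(x_\infty,y_\infty)$ has infinite order. Your version is in fact slightly more careful, since you spell out why $x_\infty=\pm e^{-1}$ forces $e^2\in\{-1,-\tfrac12\}$ and you flag the values $t\in\{\pm 3,\pm\tfrac13\}$ (where $e=\pm1$) that the statement of the theorem tacitly excludes.
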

\begin{proof}
The twisted Edwards curve has torsion group $\ZTF$ because $d=-e^4$ and $e$ is not 
equal to $0$ and $\pm 1$. The point $(x_{\infty},y_{\infty})$ is on the
curve and since $x_{\infty}\notin\{0,\infty,e^{-1},-e^{-1}\}$
this is a non-torsion point.
\end{proof}

This rational parametrization allowed us to impose additional conditions
on
the parameter $e$. For the four families, except $e=g^2$ which is treated below, the parameter
$e$ is given by an elliptic curve of rank $0$ over $\Q$. 

\begin{cor} Let $P=(x,y)$ be a non-torsion point on the elliptic curve
$y^2=x^3-36x$ having rank $1$. Let $t=\frac{x+6}{x-6}$,  using 
notations of Theorem~\ref{parametrization}, the curve $E_{-e^4}$ belongs
 to the family $e=g^2$ and has positive rank over $\Q$. 
\end{cor}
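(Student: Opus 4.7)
The plan is to compute $e$ explicitly as a function of the coordinates $(x,y)$ and then verify directly that $e$ is a perfect rational square. First I would substitute $t=\frac{x+6}{x-6}$ into the formula $e=\frac{3(t^2-1)}{8t}$ of Theorem~\ref{parametrization}. The identity
\[
t^2-1=\frac{(x+6)^2-(x-6)^2}{(x-6)^2}=\frac{24x}{(x-6)^2}
\]
simplifies everything to $e=\frac{9x}{x^2-36}$. Then, exploiting the defining equation $y^2=x^3-36x=x(x^2-36)$ to write $x^2-36=y^2/x$, I would obtain
\[
e=\frac{9x^2}{y^2}=\left(\frac{3x}{y}\right)^2.
\]
Hence $e=g^2$ with $g=\frac{3x}{y}\in\Q$, which places $E_{-e^4}$ in the subfamily $e=g^2$ of Table~\ref{Edsubfamilies}.

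For the positive-rank statement I would appeal directly to Theorem~\ref{parametrization}, which produces the non-torsion point $(x_\infty,y_\infty)$ on $E_{-e^4}$. To invoke it one needs $t\in\Q\setminus\{0,\pm 1\}$. Since $P=(x,y)$ is non-torsion on $y^2=x^3-36x$, its $x$-coordinate is not a root of $x(x^2-36)$, so $x\notin\{-6,0,6\}$. These are precisely the values that would force $t=0$, $t=-1$, or make $t$ undefined; the value $t=1$ is never attained. Theorem~\ref{parametrization} therefore applies and $(x_\infty,y_\infty)\in E_{-e^4}(\Q)$ has infinite order, so $E_{-e^4}$ has positive rank over $\Q$.

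There is no serious obstacle here: the only non-trivial step is recognizing the factorization $x^2-36=y^2/x$ coming from the Weierstrass equation, which is what turns the a priori rational number $e$ into an explicit square. The remaining content is the compatibility check on $t$ and a direct citation of Theorem~\ref{parametrization}; the hypothesis that $y^2=x^3-36x$ has rank one is used only to guarantee that such a non-torsion point $P$ exists in the first place.
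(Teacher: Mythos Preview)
Your proof is correct and is precisely the computation the paper has in mind; the paper states the corollary without proof, and your substitution $t=\frac{x+6}{x-6}$ yielding $e=\frac{9x}{x^2-36}=\left(\frac{3x}{y}\right)^2$ via the Weierstrass relation is the intended argument. Your verification that $t\notin\{0,\pm1\}$ for a non-torsion $P$ and the appeal to Theorem~\ref{parametrization} complete it cleanly.
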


\subsection{Better Suyama Curves by a Direct Change of the Galois Group.}
\label{Suyama11}
In this section we will present two families that change the Galois group of the
$4$- and $8$-torsion without modifying the factorization pattern of the 
$4$- and $8$-division polynomial.

\subsubsection{Suyama-$11$.}\label{S11}
Kruppa observed in \cite{Kru10} that among the Suyama curves, the one
corresponding to $\sigma=11$ finds exceptionally many primes. Barbulescu
\cite{Bar09} extended it to an infinite family that we present in detail
here. 

%

Experiments show that the $\sigma=11$ curve differs from other
Suyama curves only by its probabilities to have a given $2^k$-torsion when
reduced modulo primes $p \equiv 1 \pmod{4}$.
The reason is that the $\sigma=11$ curve satisfies Equation \eqref{eqs11}. 
Section \ref{2ktorsion} illustrates the changes in probabilities 
of the $\sigma=11$ curve when compared to curves which do not satisfy
Equation~\eqref{eqs11} and shows that Equation \eqref{eqs11} improves the 
average valuation of $2$ from $\frac{10}{3}$ to $\frac{11}{3}$.

Let us call Suyama-$11$ the set of Suyama curves which satisfy Equation
\eqref{eqs11}. When solving the system formed by Suyama's system plus
Equation~\eqref{eqs11}, we obtain an elliptic parametrization for $\sigma$. Given a point
$(u,v)$ on $E_{\sigma_{11}} : v^2 = u^3 - u^2 - 120u + 432$, $\sigma$ is
obtained as $\sigma = \frac{120}{u-24}+5$.
The group $E_{\sigma_{11}}(\Q)$ is generated by the points
$P_\infty=(-6,30)$, $P_2=(-12,0)$ and
$Q_2=(4,0)$ of orders $\infty$, $2$ and $2$ respectively. We exclude $0, \pm P_\infty$, $P_2$, $Q_2$, $P_2+Q_2$, and $Q_2\pm
P_\infty$, which are the points producing non-valid values of $\sigma$.
The points $\pm R, Q_2 \pm R$ lead to isomorphic curves.
Note that the $\sigma=11$ curve corresponds to the point
$(44,280)=P_\infty+P_2$.

\subsubsection{Edwards $\Z/6\Z$: Suyama-$11$ in disguise.}
In~\cite[Sec.~$5$]{starfish} it is shown that the $a=-1$ twisted Edwards curves
with $\Z/6\Z$-torsion over $\Q$ are precisely the curves $E_d$ with $d=\frac{-16u^3(u^2-u+1)}{(u-1)^6(u+1)^2}$ where
$u$ is a rational parameter.\footnote{There is a typo in the proof 
of~\cite[Thm.~5.1]{starfish}; the $\frac{16u^3(u^2-u+1)}{(u-1)^6(u+1)^2}$
misses a minus sign.} In particular, according to \cite[Sec.~$5.3$]{starfish}
one can translate any Suyama curve in Edwards language and then impose 
the condition that $-a$ is a square to obtain curves of the $a=-1$ type.
Finally, \cite[Sec.~$5.5$]{starfish} points out that this family has exceptional torsion properties. 

In order to understand the properties of this family, we translate it back
to Montgomery language using Remark~\ref{Mont=TwEd}. Thus, we are interested
in Suyama curves which satisfy equation $A+2=-Bc^2$ (the Montgomery equivalent 
for $-a$ being a square). This is the Suyama-$11$ family, so its torsion
properties were explained in Section~\ref{S11}.  
These two families have been discovered independently in~\cite{Bar09} 
and~\cite{starfish}. 

\subsubsection{Suyama-$\frac{9}{4}$.}\label{Suyama94}
In experiments by Zimmermann, new Suyama curves with exceptional
torsion properties were discovered, such as $\sigma=\frac{9}{4}$.
Further experiments
show that their special properties are related to the
$2^k$-torsion and concern exclusively primes $p \equiv 1 \pmod{4}$. 
Indeed, the $\sigma=\frac{9}{4}$ curve satisfies Equation \eqref{eqs94}. 
Section \ref{2ktorsion} illustrates the changes in probabilities 
of the $\sigma=\frac{9}{4}$ curve when compared to curves which do not satisfy
Equation~\eqref{eqs94} and shows that Equation \eqref{eqs94} improves the 
average valuation of $2$ from $\frac{10}{3}$ to $\frac{11}{3}$.

Let us call Suyama-$\frac{9}{4}$ the set of Suyama curves which satisfy Equation
\eqref{eqs94}. When solving the system formed by Suyama's system plus
Equation~\eqref{eqs94}, we obtain an elliptic parametrization for $\sigma$. Given a point
$(u,v)$ on $E_{\sigma_{94}} : v^2 = u^3 - 5u$, $\sigma$ is
obtained as $\sigma=u$.  The group $E_{\sigma_{94}}(\Q)$
is generated by the points $P_\infty=(-1,2)$ and $P_2=(0,0)$ of orders $\infty$ and $2$
respectively. We exclude the points $0,\pm
P_\infty$, $P_2$ and $P_2\pm P_\infty$ which produce non-valid values of
$\sigma$. If two points in $E_{\sigma_{94}}(\Q)$ differ by $P_2$ they correspond
to isomorphic curves. We recognize the curve associated to
$\sigma=\frac{9}{4}$ when considering the point $(\frac{9}{4},-\frac{3}{8}) =
[2]P_\infty$.

\subsection{Comparison.} Table \ref{comparison} gives a summary of all 
the families discussed in this article. The theoretical average valuations were
computed with Theorem \ref{thprobetval}, Theorem \ref{main} and Corollary 
\ref{pi} under some assumptions on Serre's exponent
(see Example \ref{ex2} for more information).
\begin{table}[t]
	\begin{center}
		\begin{tabular}{c|c||c|c|c||c|c|c}
    \multirow{2}{*}{Families} & \multirow{2}{*}{Curves} & 
    \multicolumn{3}{c||}{{\small Average valuation of $2$}} & 
       \multicolumn{3}{c}{{\small Average valuation of $3$}} \\ 
     & & $n$ & Th. & Exp. & $n$ & Th. & Exp. \\
		\hline
    Suyama & $\sigma=12$ \rule[-0.15cm]{0cm}{0.5cm} & 
                  $2$ & $\frac{10}{3}\approx 3.333$ & $3.331$ & 
                  $1$ & $\frac{27}{16}\approx 1.688$ & $1.689$ \\
		\hline
    Suyama-$11$ & $\sigma=11$ \rule[-0.15cm]{0cm}{0.5cm} & 
                  $2$ & $\frac{11}{3}\approx 3.667$ & $3.669$ & 
                  $1$ & $\frac{27}{16}\approx 1.688$ & $1.687$ \\
		\hline
    Suyama-$\frac{9}{4}$ & $\sigma=\frac{9}{4}$ \rule[-0.15cm]{0cm}{0.5cm} & 
                  $3$ & $\frac{11}{3}\approx 3.667$ & $3.664$ &
                  $1$ & $\frac{27}{16}\approx 1.688$ & $1.687$ \\
		\hline
		\hline
    $\Z/2\Z \times \Z/4\Z$ & $E_{-{11}^4}$ \rule[-0.15cm]{0cm}{0.5cm} & 
                  $3$ & $\frac{14}{3}\approx 4.667$ & $4.666$ &
                  $1^*$ & $\frac{87}{128}\approx 0.680$ & $0.679$ \\
		\hline
    $e=\frac{g-\frac{1}{g}}{2}$ & 
            $E_{-{\left(\frac{77}{36}\right)}^4}$ \rule[-0.15cm]{0cm}{0.5cm} & 
                  $3$ & $\frac{16}{3}\approx 5.333$ & $5.332$ &
                  $1^*$ & $\frac{87}{128}\approx 0.680$ & $0.679$ \\
		\hline
    $e=g^2$ & 
            $E_{-{9}^4}$ \rule[-0.15cm]{0cm}{0.5cm} & 
                  $3$ & $\frac{29}{6}\approx 4.833$ & $4.833$ &
                  $1^*$ & $\frac{87}{128}\approx 0.680$ & $0.680$ \\
		\hline
    $e=\frac{g^2}{2}$ & 
            $E_{-{\left(\frac{81}{8}\right)}^4}$ \rule[-0.15cm]{0cm}{0.55cm} & 
                  $3$ & $\frac{29}{6}\approx 4.833$ & $4.831$ &
                  $1^*$ & $\frac{87}{128}\approx 0.680$ & $0.679$ \\
		\hline
    $e=\frac{2g^2+2g+1}{2g+1}$ & 
            $E_{-{\left(\frac{5}{3}\right)}^4}$ \rule[-0.15cm]{0cm}{0.55cm} & 
                  $3$ & $\frac{29}{6}\approx 4.833$ & $4.833$ &
                  $1^*$ & $\frac{87}{128}\approx 0.680$ & $0.679$ \\
		\end{tabular}
	\end{center}

	\caption{Experimental values (Exp.) are obtained with all primes below 
  $2^{25}$. The case $n=1^*$ means that the Galois group is isomorphic to 
  $GL_2(\Z/\pi\Z)$.}
  \label{comparison}
\end{table}

Note that, when we impose torsion points over $\Q$, the average valuation does
not simply increase by $1$, as can be seen in Table~\ref{comparison} for the
average valuation of $3$. 

\section{Conclusion and Further Work}
We have used Galois theory in order to analyze the torsion properties of
elliptic curves. We have determined the behavior of generic elliptic
curves and explained the exceptional properties of some known curves
(Edwards curves of torsion $\Z/2\Z\times \Z/4\Z$ and $\Z/6\Z$). The new
techniques suggested by the theoretical study have helped us to find infinite
families of curves having
exceptional torsion properties. We list some questions which were not
addressed in this work:
\begin{itemize}
\item How does Serre's work relate to the independence of the $m$- and $m'$-torsion probabilities for
coprime integers $m$ and $m'$?
\item Is there a model predicting the success probability of ECM from
the probabilities given in Theorem \ref{thprobetval}?
\item Is it possible to effectively use the Resolvent Method \cite{Co93} in
order to compute equations which improve the torsion properties?
\end{itemize}

\bibliographystyle{abbrv}
\bibliography{article}
\end{document}